\documentclass{amsart}
\usepackage{amssymb}
\usepackage{amsmath}
\usepackage{tikz-cd}
\usepackage{amsthm}
\theoremstyle{plain}

%\textheight 23.5truecm \textwidth 15.5truecm
%\setlength{\oddsidemargin}{0.35in}\setlength{\evensidemargin}{0.35in}

%\setlength{\topmargin}{-.5cm}

\theoremstyle{plain}
\newtheorem{thm}{Theorem}
\newtheorem{prop}{Proposition}
\newtheorem{lem}{Lemma}
\newtheorem{coro}{Corollary}

\usepackage{geometry}
 \geometry{
 a4paper,
 total={150mm,257mm},
 left=30mm,
 top=20mm,
 }

\theoremstyle{definition}

\newtheorem{rem}{Remark}

\begin{document}
\setcounter{page}{1}

\title[$AM$-spaces from a locally solid vector lattice point of view with applications]{ $AM$-spaces from a locally solid vector lattice point of view with applications}

\author[Omid Zabeti ]{Omid Zabeti}

\address{ Department of Mathematics, University of Sistan and Baluchestan, P.O. Box: 98135-674, Zahedan, Iran.}
\email{{o.zabeti@gmail.com}}

\subjclass[2010]{46A40, 47B65, 46A32.}

\keywords{Locally solid vector lattice, bounded operator, $AM$-property, Levi property, Lebesgue property.}

\date{Received: xxxxxx; Revised: yyyyyy; Accepted: zzzzzz.}

\begin{abstract}
Suppose $X$ is a locally solid vector lattice. In this paper, we introduce the notion "$AM$-property" in $X$ as an extension for $AM$-spaces in the category of all Banach lattices. With the aid of this concept, we characterize spaces in which bounded sets and order bounded sets agree. This, in turn, characterizes conditions under which each class of bounded operators on $X$ is order bounded and vice versa. Also, we show that under some natural assumptions, different types of bounded order bounded operators on $X$ have the Lebesgue or Levi property if and only if so is $X$.
\end{abstract}
\date{\today}
\maketitle

\section{motivation and Preliminaries}
Let us start with some motivation. Suppose $K$ is a compact Hausdorff topological space and $C(K)$ is the Banach lattice of all real-valued continuous functions on $K$. In general, $C(K)$-spaces have important properties in the category of all Banach lattices; for example in a $C(K)$-space, the Levi property and order completeness agree, order boundedness and boundedness coincide and so on. In addition, every $AM$-space can be isometrically embedded into a $C(K)$-space ( the remarkable Kakutani theorem); recall that a Banach lattice $E$ which satisfies $\|x\vee y\|=\|x\|\vee\|y\|$ for every $x,y\in E_{+}$ is termed an $AM$-space. On the other hand, one interesting question regarding operators between normed lattices is the following.

Is there any relation between continuous operators and order bounded ones?

It is known that when the domain is a Banach lattice, every order bounded operator is continuous. In general, the answer for the other direction is almost negative. Nevertheless, in some special cases, this can happen; when $E$ and $F$ are $C(K)$-spaces, then an operator $T:E\to F$ is continuous if and only if it is order bounded;  note that we can not replace $C(K)$-space with an $AM$-space ( see \cite[Example 4.73]{AB}). Furthermore, in a locally solid vector lattice, there are several non-equivalent ways to define bounded operators. So, it is interesting to investigate the relations between these types of bounded operators and order bounded ones. This is what our paper is about. Moreover, it is shown that in \cite{EGZ}, under some mild assumptions, each class of bounded order bounded operators between locally solid vector lattices, can have lattice structure, too. So, another interesting direction is to find situations under which a known property regarding lattice structure such as the Levi or Lebesgue property can be transformed between the space of bounded operators and the underlying space. This is another application of $AM$-property. More explicitly, we show that each class of bounded operators between locally solid vector lattices, under some conditions related to $AM$-property, possesses the Levi or Lebesgue property if and only if so is the range space.

Now, we consider some preliminaries which are needed in the sequel. For undefined terminology and also related notions as well as for a background on locally solid vector lattices, the reader is referred to \cite{AB1,AB}.

A vector lattice $X$ is called {\bf order complete} if every non-empty bounded above subset of $X$ has a supremum. $X$ is {\bf Archimedean} if $nx\leq y$ for each $n\in \Bbb N$ implies that $x\leq 0$. It is known that every order complete vector lattice is Archimedean. A set $S\subseteq X$ is called {\bf solid} if $x\in X$, $y\in S$ and $|x|\leq |y|$ imply that $x\in S$. Also, recall that a linear topology $\tau$ on a vector lattice $X$ is referred to as  {\bf locally solid} if it has a local basis at zero consisting of solid sets.

Suppose $X$ is a locally solid vector lattice. A net $(x_{\alpha})\subseteq X$ is said to be {\bf order} convergent to $x\in X$ if there exists a net $(z_{\beta})$ ( possibly over a different index set) such that $z_{\beta}\downarrow 0$ and for every $\beta$, there is an $\alpha_0$ with $|x_{\alpha}-x|\leq z_{\beta}$ for each $\alpha\ge \alpha_0$. A set $A\subseteq X$ is called {\bf order closed} if it contains limits of all order convergent nets which lie in $A$.

 Keep in mind that topology $\tau$ on a locally solid vector lattice $(X,\tau)$ is referred to as {\bf Fatou} if it has a local basis at zero consisting of solid order closed neighborhoods. In this case, we say that $X$ has the Fatou property.

 Observe that a locally solid vector lattice $(X,\tau)$ is said to have the {\bf Levi property} if every $\tau$-bounded upward directed set in $X_{+}$ has a supremum.

 Finally, recall that a locally solid vector lattice $(X,\tau)$ possesses the {\bf Lebesgue property} if for every net $(u_{\alpha})$ in $X$, $u_{\alpha}\downarrow 0$ implies that $u_{\alpha}\xrightarrow{\tau}0$.

Let us fix a convention. Suppose $(X,\tau)$ is a locally solid vector lattice. So, it has a local basis at zero consisting of solid sets. In this paper, we always choose zero neighborhoods from this basis. Moreover, if $X$ possesses the Fatou property, it contains a local basis consisting of solid order closed sets. So, we always pick zero neighborhoods from this basis.

For undefined expressions and related topics, see \cite{AB1}.

\section{main results}

{\bf Observation}. Suppose $X$ is an Archimedean vector lattice. For every subset $A$, by $A^{\vee}$, we mean the set of all finite suprema of elements of $A$; more precisely,
$A^{\vee}=\{a_1\vee\ldots\vee a_n: n\in \Bbb N, a_i\in A\}$. It is obvious that $A$ is bounded above in $X$ if and only if so is $A^{\vee}$ and in this case, when the supremum exists, $\sup A=\sup A^{\vee}$. Moreover, put $A^{\wedge}=\{a_1\wedge\ldots\wedge a_n: n\in \Bbb N, a_i\in A\}$. It is easy to see that $A$ is bounded below if and only if so is $A^{\wedge}$ and $\inf A=\inf A^{\wedge}$ ( when the infimum exists). Observe that  $A^{\vee}$ can be viewed as an upward directed set in $X$ and $A^{\wedge}$ can be considered as a downward directed set.

Suppose $X$ is a locally solid vector lattice. We say that $X$ has {\bf $AM$-property} provided that for every bounded set $B\subseteq X$, $B^{\vee}$ is also bounded. It is worthwhile to mention that when $B$ is bounded and solid, $B^{\vee}$ is bounded if and only if $B^{\wedge}$ is bounded; this follows from the fact that $X$ is locally solid and $x_1\wedge\ldots\wedge x_n=-((-x_1)\vee\ldots\vee(-x_n))$ for any $n\in \Bbb N$ and any $x_i\in B$.

In prior to anything, we show that $AM$-property is the "right" extension for $AM$-spaces. Recall that a Banach lattice $E$ is called an $AM$-space if for all positive $x,y\in E$, $\|x\vee y\|= \|x\|\vee \|y\|$.
\begin{prop}\label{800}
Suppose $E$ is a Banach lattice. Then, $E$ is an $AM$-space if and only if it possesses $AM$-property.
\end{prop}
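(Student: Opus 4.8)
The plan is to prove the two implications separately, treating ``bounded'' throughout as norm-bounded, since in a Banach lattice the von Neumann bounded sets coincide with the norm-bounded ones.

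For the forward implication, assume $E$ is an $AM$-space and let $B$ be bounded, say $\|b\|\le M$ for all $b\in B$. First I would record the general lattice inequality $|a_1\vee\cdots\vee a_n|\le |a_1|\vee\cdots\vee|a_n|$, which follows from the two-term estimate $-(|a|\vee|b|)\le a\vee b\le |a|\vee|b|$ together with an easy induction. Next, I would extend the defining identity of an $AM$-space from two positive vectors to finitely many by induction, obtaining $\bigl\||x_1|\vee\cdots\vee|x_n|\bigr\|=\max_i\|x_i\|$. Combining these with the monotonicity of the lattice norm yields, for any $a_1,\dots,a_n\in B$,
\[
\|a_1\vee\cdots\vee a_n\|=\bigl\||a_1\vee\cdots\vee a_n|\bigr\|\le \bigl\||a_1|\vee\cdots\vee|a_n|\bigr\|=\max_i\|a_i\|\le M,
\]
so that $B^{\vee}$ is bounded, indeed by the same constant $M$. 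This direction is routine.

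For the converse, assume $E$ has the $AM$-property. The inequality $\|x\vee y\|\ge\|x\|\vee\|y\|$ for $x,y\in E_{+}$ is automatic from $0\le x\le x\vee y$ and monotonicity, and holds in every Banach lattice. For the reverse inequality I would apply the $AM$-property to the closed unit ball $B_E$: this produces a finite constant $M$ with $\|u\|\le M$ for every $u\in B_E^{\vee}$. Normalizing by $c=\|x\|\vee\|y\|$ and using positive homogeneity then gives the quasi-inequality $\|x\vee y\|\le M\,(\|x\|\vee\|y\|)$ for all positive $x,y$.

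The main obstacle is the final step: upgrading the constant $M$ to the sharp value $1$, which is exactly what the $AM$-space identity demands. The uniform bound coming from the unit ball controls all finite suprema simultaneously, and the natural strategy is the contrapositive---given a single positive pair with $\|x\vee y\|>\|x\|\vee\|y\|$, to manufacture from it a norm-bounded family whose finite suprema have arbitrarily large norm, thereby contradicting the $AM$-property. Carrying this amplification out, for instance by locating sufficiently many disjoint rescaled copies of the offending configuration, is the delicate heart of the equivalence; it is precisely here that one must exploit the full strength of the hypothesis on \emph{all} bounded sets rather than merely on pairs, and where I would expect the real difficulty to lie.
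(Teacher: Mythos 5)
Your forward direction is complete and correct (the paper dismisses it as trivial). The gap is exactly where you located it: the converse. From the $AM$-property applied to the unit ball you obtain only the quasi-inequality $\|x\vee y\|\le M\,(\|x\|\vee\|y\|)$ for positive $x,y$, and the upgrade from $M$ to the sharp constant $1$ is left as a hoped-for ``amplification''. That step is not a deferrable technicality; it is the entire content of the implication, and in fact it cannot be carried out, because the implication is false under the paper's (isometric) definition of an $AM$-space. Take $E=\ell_1^2=(\mathbb{R}^2,\|\cdot\|_1)$. If $\|b\|_1\le C$ for every $b\in B$, then both coordinates of every element of $B$ lie in $[-C,C]$, hence so do the coordinates of every finite supremum, so every element of $B^{\vee}$ has $\ell_1$-norm at most $2C$; thus $\ell_1^2$ has the $AM$-property (the same coordinate argument gives it to every finite-dimensional Banach lattice). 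Yet $\ell_1^2$ is not an $AM$-space, since $\|(1,0)\vee(0,1)\|_1=2$ while $\|(1,0)\|_1\vee\|(0,1)\|_1=1$. In particular, no scheme of disjoint rescaled copies can convert one offending pair into finite suprema of unbounded norm: in $\ell_1^2$ offending pairs abound while every bounded set has bounded $B^{\vee}$. The most the $AM$-property can ever yield is your quasi-inequality, i.e.\ an isomorphic rather than isometric $M$-structure.

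For comparison, the paper's own proof of this direction stumbles at precisely the point you flagged. After fixing $M>1$ with $\|x\vee y\|<M$ for all $x,y$ in the positive part of the open unit ball, it concludes $\|x\vee y\|\le\|x\|\vee\|y\|$ by citing the real-number fact ``if $a<c$ implies $b<c$ for every real $c$, then $b\le a$'', with $a=\|x\|\vee\|y\|$ and $b=\|x\vee y\|$. But what the $AM$-property and homogeneity actually give is ``$a<c$ implies $b<Mc$''; the hypothesis ``$a<c$ implies $b<c$'' is never established, and the example above shows it cannot be. So your honest assessment that the delicate heart remains unproved is accurate, and it moreover pinpoints a genuine error in Proposition \ref{800} itself: as stated, it would force every finite-dimensional Banach lattice, such as $\ell_1^2$, to be an $AM$-space. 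A correct statement would either weaken the conclusion to ``lattice isomorphic to an $AM$-space'' (which your quasi-inequality essentially delivers, via a standard renorming) or strengthen the $AM$-property to a quantitative form that controls $B^{\vee}$ by the same bound as $B$.
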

\begin{proof}
The direct implication is trivial. For the other direction, assume that $E$ has $AM$-property. Consider bounded set $B\subseteq E$. W.L.O.G, suppose that $B=\{x\in E, \|x\|<1\}$. By the assumption, $B^{\vee}$ is also bounded so that there exists a positive real $M>1$ such that for each $x,y \in B_{+}$, $\|x\vee y\|<M$. Now, we use this simple inequality in reals; given reals $a,b$. If for every real $c$, $a<c$ implies that $b<c$, then $b\leq a$. Thus, $\|x\vee y\|\leq \|x\|\vee \|y\|$. Moreover, it is easy to see that $\|x\|\vee\|y\|\leq \|x\vee y\|$.
\end{proof}
By Proposition \ref{800}, the notions of an $AM$-space and the $AM$-property in a Banach lattice agree. But this is not the only case in the category of all locally solid vector lattices.

\begin{prop}\label{0}
Suppose $(X_{\alpha})_{\alpha\in A}$ is a family of locally solid vector lattices. Put $X=\prod_{\alpha\in A}{X_{\alpha}}$ with product topology and pointwise ordering. If each $X_{\alpha}$ has the $AM$-property, then so is $X$.
\end{prop}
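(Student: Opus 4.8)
The plan is to transfer the $AM$-property coordinatewise, exploiting that both the order and the topology on $X$ are determined factorwise. Let $\pi_{\alpha}\colon X\to X_{\alpha}$ denote the canonical projection. The first step is to record the decisive structural fact about the product topology: a set $B\subseteq X$ is $\tau$-bounded if and only if $\pi_{\alpha}(B)$ is bounded in $X_{\alpha}$ for every $\alpha\in A$. One direction is immediate, since each $\pi_{\alpha}$ is linear and continuous, and continuous linear maps carry bounded sets to bounded sets. For the converse I would use that a local basis at zero for the product topology is given by sets of the form $U=\prod_{\alpha}U_{\alpha}$, where each $U_{\alpha}$ is a solid zero-neighborhood in $X_{\alpha}$ and $U_{\alpha}=X_{\alpha}$ for all but finitely many indices $\alpha_1,\ldots,\alpha_k$; given coordinatewise boundedness, choose $\lambda_i$ with $\pi_{\alpha_i}(B)\subseteq\lambda_i U_{\alpha_i}$ and set $\lambda=\max_i\lambda_i$ to obtain $B\subseteq\lambda U$.

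The second step is to observe that the lattice operations in $X$ are computed pointwise, so each projection $\pi_{\alpha}$ is a lattice homomorphism. Consequently, for any subset $B\subseteq X$ one has the identity $\pi_{\alpha}(B^{\vee})=(\pi_{\alpha}(B))^{\vee}$, because $\pi_{\alpha}(a_1\vee\ldots\vee a_n)=\pi_{\alpha}(a_1)\vee\ldots\vee\pi_{\alpha}(a_n)$. With these two ingredients the argument closes quickly. Suppose $B\subseteq X$ is bounded. By the characterization, each $\pi_{\alpha}(B)$ is bounded in $X_{\alpha}$; since $X_{\alpha}$ has the $AM$-property, $(\pi_{\alpha}(B))^{\vee}$ is bounded, that is, $\pi_{\alpha}(B^{\vee})$ is bounded for every $\alpha$. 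Applying the characterization in the reverse direction yields that $B^{\vee}$ is bounded in $X$, which is exactly the $AM$-property for $X$. (That $X$ is itself a locally solid, Archimedean vector lattice, so that $B^{\vee}$ is meaningful, follows from the corresponding facts for the factors, the product of solid neighborhoods being solid.)

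I expect the only genuine obstacle to be the nontrivial direction of the boundedness characterization, namely that coordinatewise boundedness implies boundedness in the product. The point is to produce a single scalar $\lambda$ that simultaneously dilates $B$ into a basic product neighborhood; this is what makes essential use of the product topology, since only finitely many coordinates are constrained by such a neighborhood, together with the fact that the factor neighborhoods may be taken balanced (indeed solid), so that $\lambda_i U_{\alpha_i}\subseteq\lambda U_{\alpha_i}$ whenever $\lambda\geq\lambda_i$. Everything else — the lattice-homomorphism identity and the final assembly — is routine.
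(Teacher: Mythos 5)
Your proposal is correct and follows essentially the same route as the paper: both arguments reduce boundedness in the product to coordinatewise boundedness (the paper cites a decomposition $B\subseteq\prod_{\alpha}B_{\alpha}$ from its reference [Z], which amounts to your characterization via $\pi_{\alpha}(B)$), compute finite suprema coordinatewise, and then invoke the $AM$-property of each factor on the finitely many coordinates constrained by a basic product neighborhood, taking a maximum of the dilating scalars. Your packaging of the key step as an explicit ``bounded iff coordinatewise bounded'' lemma, together with the identity $\pi_{\alpha}(B^{\vee})=(\pi_{\alpha}(B))^{\vee}$, is a cleaner organization of the same idea rather than a different proof.
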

\begin{proof}
Suppose $B\subseteq X$ is bounded. By a simple modification of \cite[Theorem 3.1]{Z}, there exists a net $(B_{\alpha})_{\alpha\in A}$ such that for each $\alpha$, $B_{\alpha}\subseteq X_{\alpha}$ is bounded and $B\subseteq \prod_{\alpha\in A}B_{\alpha}$. We show that $B^{\vee}$ is also bounded. Let $W$ be an arbitrary zero neighborhood in $X$. So, there are zero neighborhoods $(U_{\alpha_i})_{i\in\{1,\ldots,n\}}$ such that $W=\prod_{i=1}^{n}U_{\alpha_i}\times \prod_{\alpha\in A-\{\alpha_1,\ldots,\alpha_n\}}X_{\alpha}$.

Observe that for each $x\in B$, there is a net $(x_{\beta})_{\beta \in A}$ with $x_{\beta}\in B_{\beta}$. Now, consider the set $\{x_1,\ldots,x_m\}\subseteq B$ in which $m\in \Bbb N$ is fixed but arbitrary. It is enough to show that $x_1\vee\ldots\vee x_m$ is also bounded. Note that
\[x_1\vee\ldots\vee x_m=(x_{\beta}^{1})\vee\ldots\vee(x_{\beta}^{m})=(x_{\beta}^{1}\vee\ldots\vee x_{\beta}^{m})_{\beta\in A}.\]
Where $x_{\beta}^{j}\in B_{\beta}$ for each $j\in\{1,\ldots m\}$. For each $i\in\{1,\ldots n\}$, $B_{\alpha_i}$ has $AM$-property so that there exists an $\alpha_i\in {\Bbb R}_{+}$ with $(x_{\alpha_i}^{1}\vee\ldots\vee x_{\alpha_i}^{m})\in \alpha_iU_{\alpha_i}$. Put $\alpha=\max\{\alpha_1,\ldots,\alpha_n\}$. Then, it can be seen easily that $x_1\vee\ldots\vee x_m\in \alpha W$, as claimed.
\end{proof}
The following result may be known; we present a proof for the sake of completeness.
\begin{prop}\label{15}
Suppose $(X_{\alpha})_{\alpha\in A}$ is a family of locally solid vector lattices. Put $X=\prod_{\alpha\in A}{X_{\alpha}}$ with product topology and pointwise ordering. If each $X_{\alpha}$ has the Levi property, then so is $X$.
\end{prop}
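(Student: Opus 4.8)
The plan is to reduce the problem to the factors by means of the coordinate projections $\pi_\beta\colon X\to X_\beta$. Let $D\subseteq X_{+}$ be a $\tau$-bounded upward directed set, where $\tau$ is the product topology; I want to produce $\sup D$ in $X$. For each $\beta\in A$ set $D_\beta=\pi_\beta(D)\subseteq (X_\beta)_{+}$, the inclusion in the positive cone holding because, under the pointwise ordering, $X_{+}=\prod_{\alpha\in A}(X_\alpha)_{+}$, so every coordinate of a positive element is positive.

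First I would check that each $D_\beta$ inherits the two hypotheses needed to apply the Levi property of $X_\beta$. Since $\pi_\beta$ is linear and continuous, it carries $\tau$-bounded sets to bounded sets, so $D_\beta$ is $\tau_\beta$-bounded; alternatively this is immediate from the description of bounded subsets of a product already invoked in the proof of Proposition \ref{0} (see \cite[Theorem 3.1]{Z}). Moreover $\pi_\beta$ is a lattice homomorphism, so $D_\beta$ is upward directed: given $d_\beta=\pi_\beta(d)$ and $d_\beta'=\pi_\beta(d')$ in $D_\beta$, pick $d''\in D$ with $d''\ge d,d'$ and observe that $\pi_\beta(d'')\in D_\beta$ dominates both. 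Thus $D_\beta$ is a $\tau_\beta$-bounded upward directed subset of $(X_\beta)_{+}$, and the Levi property of $X_\beta$ yields $s_\beta:=\sup D_\beta\in X_\beta$.

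It then remains to assemble these coordinatewise suprema into a supremum of $D$ in $X$. Put $s=(s_\beta)_{\beta\in A}\in X$. I would verify directly, using that the order on $X$ is coordinatewise, that $s=\sup D$: for every $d=(d_\beta)\in D$ we have $d_\beta\le s_\beta$ for all $\beta$, so $s$ is an upper bound; and if $u=(u_\beta)$ is any upper bound of $D$, then for each fixed $\beta$ the element $u_\beta$ dominates every member of $D_\beta=\pi_\beta(D)$, whence $s_\beta=\sup D_\beta\le u_\beta$, so $s\le u$. This exhibits $\sup D=s$ and establishes the Levi property of $X$.

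The only step requiring genuine care is the preservation of boundedness under the projections, since the whole reduction hinges on each $D_\beta$ being $\tau_\beta$-bounded; once that is in hand, the upward-directedness and the coordinatewise computation of the supremum are routine. I expect no further obstacle, because in a product vector lattice with pointwise order a family admits a supremum exactly when it does so in each coordinate, so no compatibility between the factors has to be checked.
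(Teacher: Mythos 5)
Your proposal is correct and follows essentially the same route as the paper: reduce to the factors via the coordinate projections, use boundedness of the projected sets (from the product topology), apply the Levi property coordinatewise, and assemble the coordinatewise suprema into the supremum in the pointwise order. The only cosmetic difference is that you phrase the argument for upward directed sets (matching the paper's stated definition of the Levi property) and verify the routine steps in more detail, whereas the paper works with an increasing net and leaves those verifications implicit.
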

\begin{proof}
Suppose $(x^{\beta})_{\beta \in B}$ is a bounded increasing net in $X$. We need to show that its supremum exists. Observe that for each $\beta$, $x^{\beta}=(x^{\beta}_{\alpha})_{\alpha\in A}$. Since $X$ has product topology, we conclude that the net is pointwise bounded; more precisely, for each fixed $\alpha$, the net $(x^{\beta}_{\alpha})_{\beta\in B}$ is bounded and also increasing in $X_{\alpha}$ so that it has a supremum by the assumption, namely, $y_{\alpha}=\sup\{(x^{\beta}_{\alpha})_{\beta\in B}\}$. Now, it can be easily seen that $y=(y_{\alpha})_{\alpha\in A}=\sup \{(x^{\beta}_{\alpha})_{\alpha\in A,\beta\in B}\}$.
\end{proof}
Proposition \ref{0} and Proposition \ref{15} describe many examples of locally solid vector lattices with $AM$ and Levi properties. For example, consider ${\Bbb R}^{\Bbb N}$, the space of all real sequences. It is a locally solid vector lattice with product topology and pointwise ordering. On may consider this point that it has $AM$ and Levi properties.

Let us recall some notions regarding bounded operators between topological vector spaces. Let $X$ and $Y$ be  topological vector spaces. A linear operator $T$ from $X$ into $Y$ is said to be $nb$-bounded if there is a zero neighborhood $U\subseteq X$ such that $T(U)$ is bounded in $Y$. $T$ is called $bb$-bounded if for each bounded set $B\subseteq X$, $T(B)$ is bounded. These concepts are not equivalent; more precisely, continuous operators are, in a sense,  in the middle of these notions of bounded operators, but in a normed space, these concepts have the same meaning.
The class of all $nb$-bounded operators from $X$  into $Y$ is denoted by $B_{n}(X,Y)$ and is equipped with the topology of uniform convergence on some zero neighborhood, namely, a net $(S_{\alpha})$ of $nb$-bounded operators converges to zero on some zero neighborhood $U\subseteq X$ if for any zero neighborhood $V\subseteq Y$ there is an $\alpha_0$ such that $S_{\alpha}(U) \subseteq V$ for each $\alpha\geq\alpha_0$. The class of all $bb$-bounded operators from $X$ into $Y$ is denoted by $B_{b}(X,Y)$ and is allocated to the topology of uniform convergence on bounded sets. Recall that a net $(S_{\alpha})$ of  $bb$-bounded operators uniformly converges to zero on a bounded set $B\subseteq X$ if for any zero neighborhood $V \subseteq Y$ there is an $\alpha_0$ with $S_{\alpha}(B) \subseteq V$ for each $\alpha\geq\alpha_0$.

The class of all continuous operators from $X$ into $Y$  is denoted by $B_c(X,Y)$ and is equipped with the topology of equicontinuous convergence, namely, a net $(S_{\alpha})$ of continuous operators converges equicontinuously to zero if for each zero neighborhood $V\subseteq Y$ there is a zero neighborhood $U\subseteq X$ such that for every $\varepsilon>0$ there exists an $\alpha_0$ with $S_{\alpha}(U)\subseteq \varepsilon V$ for each $\alpha\geq\alpha_0$. See \cite{Tr} for a detailed exposition on these classes of operators. In general, we have $B_n(X,Y)\subseteq B_c(X,Y)\subseteq B_b(X,Y)$ and when $X$ is locally bounded, they coincide.
\begin{rem}
In general, when we are dealing with bounded operators between locally solid vector lattices, there is no specific relation between these classes of bounded operators and order bounded operators; see \cite{EGZ} for more information. So, it is reasonable to consider $B^{b}_{n}(X,Y)$: the space of all order bounded $nb$-bounded operators, $B^{b}_{b}(X,Y)$: the space of all $bb$-bounded order bounded operators, $B^{b}_{c}(X,Y)$: the space of all continuous order bounded operators between locally solid vector lattices $X$ and $Y$. It is shown in \cite[Lemma 2.2]{EGZ} that these classes of operators under some mild assumptions: order completeness and the Fatou property of the range space, form vector lattices, again. Moreover, with respect to the assumed topology, each class of bounded order bounded operators, is locally solid.
\end{rem}
\begin{thm}\label{12}
  Suppose $X$ is an order complete locally solid vector lattice. The following are equivalent.
  \begin{itemize}
		\item[\em (i)] { $X$ possesses $AM$ and Levi properties}.
		\item[\em (ii)] { Every order bounded set in $X$ is bounded and vice versa}.
		\end{itemize}
\end{thm}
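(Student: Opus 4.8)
The plan is to exploit the fact that in any locally solid vector lattice one implication of (ii) is automatic: every order interval $[a,b]$ is $\tau$-bounded. This is a standard consequence of local solidity, since $[a,b]=a+[0,b-a]$ and $[0,b-a]$ lies in the solid hull of the singleton $\{b-a\}$, which any solid zero neighborhood absorbs. Hence ``order bounded $\Rightarrow$ bounded'' needs no hypotheses, and the genuine content of the equivalence is the matching of (i) with the converse statement that every $\tau$-bounded set is order bounded.

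For the implication (ii) $\Rightarrow$ (i), I would first verify the $AM$-property. Given a bounded $B$, condition (ii) places it inside an order interval $[a,b]$; since $a_1\vee\cdots\vee a_n\le b$ and $a_1\vee\cdots\vee a_n\ge a_1\ge a$ whenever the $a_i$ lie in $[a,b]$, the set $B^{\vee}$ is again contained in $[a,b]$, hence order bounded, hence bounded. Next I would verify the Levi property: a $\tau$-bounded upward directed $D\subseteq X_{+}$ is, by (ii), order bounded and so bounded above in the order, whereupon order completeness of $X$ furnishes $\sup D$. This is the only place where the order-completeness hypothesis is genuinely needed.

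For the implication (i) $\Rightarrow$ (ii), the idea is to manufacture an explicit order bound for a given bounded set $B$ from the two hypotheses in tandem. First I would pass to $|B|=\{|x|:x\in B\}$; local solidity makes $|B|$ bounded, because any solid zero neighborhood absorbing $B$ also absorbs $|B|$, as $x$ and $|x|$ have the same modulus. The $AM$-property then makes $|B|^{\vee}$ bounded, and $|B|^{\vee}$ is an upward directed subset of $X_{+}$, so the Levi property produces $u=\sup |B|^{\vee}\in X_{+}$. Since $u$ dominates every element of $|B|$, we obtain $|x|\le u$, that is $-u\le x\le u$, for all $x\in B$; thus $B\subseteq[-u,u]$ is order bounded, completing the converse direction already noted to be automatic.

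The substantive step, and the reason both halves of (i) must be used together, is precisely this last construction: the $AM$-property guarantees that passing to finite suprema does not destroy topological boundedness, while the Levi property converts the resulting bounded upward directed family into an actual order upper bound. I expect neither property alone to suffice, and the only delicate point in writing the argument cleanly will be the routine-but-essential verification that $|B|$ and $B^{\vee}$ stay bounded, which rests entirely on the solidity of the chosen zero neighborhoods.
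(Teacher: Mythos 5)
Your proof is correct and follows essentially the same route as the paper: the implication from order bounded to bounded is the trivial one, (ii)$\Rightarrow$(i) is obtained by noting $B^{\vee}$ stays order bounded and by combining order boundedness with order completeness, and the substantive direction (i)$\Rightarrow$(ii) uses the $AM$-property to keep finite suprema bounded and the Levi property to produce the order upper bound. The only cosmetic difference is that you work with $|B|=\{|x|:x\in B\}$ directly, whereas the paper replaces $B$ by its solid hull and argues with $B_{+}$ and the decomposition $B\subseteq B_{+}-B_{+}$; both devices accomplish the same reduction.
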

\begin{proof}
$(i)\to (ii)$. The direct implication is trivial by \cite[Theorem 2.19]{AB1} since $X$ is locally solid. For the converse, assume that $B\subseteq X$ is bounded; W.L.O.G, we may assume that $B$ is solid, otherwise, consider the solid hull of $B$ which is again bounded. So, $B_{+}=\{x\in B, x\geq 0\}$ is also bounded. Assume that $(B_{+})^{\vee}$ is the set of all finite suprema of elements of $B_{+}$. By the $AM$-property, $(B_{+})^{\vee}$ is also bounded. In addition, $(B_{+})^{\vee}$ can be considered as an increasing net in $X_{+}$. So, by the Levi property, $\sup (B_{+})^{\vee}$ exists. But in this case, $\sup B_{+}$ also exists and $\sup (B_{+})^{\vee}=\sup B_{+}$. Put $y=\sup B_{+}$. Therefore, for each $x\in B_{+}$, $x\leq y$; now, it is clear from the relation $B\subseteq B_{+}-B_{+}$ that $B$ is also order bounded.

$(ii)\to (i)$. Suppose $B\subseteq X$ is bounded so that order bounded. Now, it is clear that $B^{\vee}$ is also order bounded and therefore bounded.

Suppose $D$ is an upward directed bounded set in $X_{+}$. So, it is order bounded. Now, $D$ has a supremum since $X$ is order complete.

\end{proof}
Observe that order completeness is essential as an assumption for Theorem \ref{12} and can not be removed. Consider $X=C[0,1]$; it possesses $AM$-property. Also, boundedness and order boundedness agree in $X$. But it does not have the Levi property.
\begin{coro}\label{13}
 Suppose $X$ and $Y$ are locally solid vector lattices such that $Y$ possesses $AM$ and  Levi properties. Then every $bb$-bounded operator $T:X\to Y$ is order bounded; similar results hold for $nb$-bounded operators as well as continuous operators.
\end{coro}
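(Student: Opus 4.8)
The plan is to reduce the order boundedness of $T$ to the interplay between topological and order boundedness at the two ends, using $bb$-boundedness as the bridge. Recall that $T$ is order bounded precisely when it carries order bounded sets to order bounded sets, so it suffices to fix an order bounded $A\subseteq X$, say $A\subseteq[a,b]$, and show that $T(A)$ is order bounded in $Y$. First I would observe that, since $X$ is locally solid, every order interval is $\tau$-bounded by \cite[Theorem 2.19]{AB1}; hence $A$ is $\tau$-bounded in $X$. Applying the defining property of a $bb$-bounded operator, $T(A)$ is then $\tau$-bounded in $Y$. The whole argument thus hinges on a single conversion in the range: turning a $\tau$-bounded subset of $Y$ back into an order bounded one.

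This conversion is exactly the implication ``bounded $\Rightarrow$ order bounded'' established inside the proof of Theorem \ref{12}, and the crucial point is that that half of the argument never invokes order completeness (order completeness was used only for the reverse passage $(ii)\to(i)$), so it is legitimate here even though the corollary assumes of $Y$ only the $AM$ and Levi properties. Concretely, writing $C$ for the solid hull of $T(A)$, which remains $\tau$-bounded because $Y$ is locally solid, I would take its positive part $C_{+}$, pass to the upward directed set $C_{+}^{\vee}$ of finite suprema, and use the $AM$-property of $Y$ to see that $C_{+}^{\vee}$ is again $\tau$-bounded. The Levi property of $Y$ then furnishes $y=\sup C_{+}^{\vee}=\sup C_{+}$, whence $C_{+}\subseteq[0,y]$ and, via $C\subseteq C_{+}-C_{+}\subseteq[-y,y]$, the set $T(A)\subseteq C$ is order bounded. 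This shows that every $bb$-bounded operator $T\colon X\to Y$ is order bounded.

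For the remaining two classes there is nothing more to prove: by the inclusions $B_{n}(X,Y)\subseteq B_{c}(X,Y)\subseteq B_{b}(X,Y)$ recorded earlier, every $nb$-bounded operator and every continuous operator is in particular $bb$-bounded, hence order bounded by the case already treated.

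The only delicate point, and the step I would watch most carefully, is this conversion in the range space: one must verify that the ``bounded $\Rightarrow$ order bounded'' half of Theorem \ref{12} genuinely dispenses with order completeness, so that the supremum producing the order bound is supplied purely by the Levi property applied to the $AM$-controlled set $C_{+}^{\vee}$. Everything else is a routine chaining of the definitions of $bb$-boundedness, solidity of the hull, and local solidity.
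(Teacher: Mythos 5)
Your proof is correct and follows essentially the same route as the paper's: an order bounded set $A\subseteq X$ is $\tau$-bounded by local solidity, $bb$-boundedness pushes it to a $\tau$-bounded set $T(A)\subseteq Y$, the $AM$ and Levi properties of $Y$ convert $\tau$-boundedness back into order boundedness, and the $nb$-bounded and continuous cases follow from the inclusions $B_n(X,Y)\subseteq B_c(X,Y)\subseteq B_b(X,Y)$. The one place where you genuinely improve on the paper is precisely the point you flag as delicate: the paper disposes of the conversion in $Y$ by simply citing Theorem \ref{12}, but that theorem is stated for an \emph{order complete} locally solid vector lattice, and the corollary does not assume $Y$ order complete, so the citation is formally inapplicable as written. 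Your repair --- inlining the $(i)\to(ii)$ half of the proof of Theorem \ref{12} (solid hull, positive part, finite suprema $C_{+}^{\vee}$, $AM$-property, Levi property) and checking that this half never invokes order completeness --- is exactly right. An alternative repair, which keeps the paper's proof verbatim, is to observe that in a locally solid vector lattice the Levi property already implies order completeness: if $A$ is nonempty and bounded above by $b$, fix $a_0\in A$ and apply the Levi property to the upward directed set $\{(a_1\vee\ldots\vee a_n\vee a_0)-a_0:\ n\in\Bbb N,\ a_i\in A\}\subseteq Y_{+}$, which is order bounded by $b-a_0$ and hence $\tau$-bounded by \cite[Theorem 2.19]{AB1}; thus Theorem \ref{12} does apply to $Y$ after all. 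Either way, your argument is complete.
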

\begin{proof}
Suppose $A\subseteq X$ is order bounded. So, it is bounded. By the assumption, $T(A)$ is also bounded in $Y$. Therefore, Theorem \ref{12} results in order boundedness of $T(A)$. The other part follows from this fact that every $nb$-bounded operator as well as every continuous operator is $bb$-bounded.
\end{proof}
By considering Corollary \ref{13} and \cite[Lemma 2.2]{EGZ}, we have the following observations.
\begin{coro}
Suppose $X$ and $Y$ are locally solid vector lattices such that $Y$ possesses $AM$, Fatou, and Levi properties. Then $B_{n}(X,Y)$ is a vector lattice.
\end{coro}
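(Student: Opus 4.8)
The plan is to combine Corollary \ref{13} with \cite[Lemma 2.2]{EGZ}. First I would observe that the hypotheses placed on $Y$, namely the $AM$ and Levi properties, are exactly what Corollary \ref{13} requires in order to conclude that every $nb$-bounded operator $T\colon X\to Y$ is order bounded. Since in general $B^{b}_{n}(X,Y)\subseteq B_{n}(X,Y)$, this reverse containment forces $B_{n}(X,Y)=B^{b}_{n}(X,Y)$, the space of order bounded $nb$-bounded operators; the linear and order structures are inherited identically, so the two are the same ordered vector space and it suffices to treat $B^{b}_{n}(X,Y)$.

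The remaining task is to check that $B^{b}_{n}(X,Y)$ is a vector lattice, and for this I would invoke \cite[Lemma 2.2]{EGZ}, whose hypotheses are order completeness and the Fatou property of the range space. The Fatou property is assumed outright. For order completeness I would argue that it is already forced by the Levi property in any locally solid vector lattice. Given a nonempty order bounded above set $A\subseteq Y$, say $a\leq y$ for all $a\in A$, replace $A$ by the upward directed set $A^{\vee}$, which has the same upper bounds and the same supremum when it exists. Fix $a_0\in A$ and pass to the cofinal subset $C=\{b\in A^{\vee}: b\geq a_0\}$; it is still upward directed and lies in the order interval $[a_0,y]$. Translating, $C-a_0$ is an upward directed subset of $Y_{+}$ contained in $[0,y-a_0]$, hence order bounded and therefore $\tau$-bounded by \cite[Theorem 2.19]{AB1}. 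The Levi property now supplies $\sup(C-a_0)$, and consequently $\sup A=\sup A^{\vee}=\sup C=a_0+\sup(C-a_0)$ exists, so $Y$ is order complete.

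With order completeness and the Fatou property in hand, \cite[Lemma 2.2]{EGZ} yields that $B^{b}_{n}(X,Y)$ is a vector lattice, and by the first step this coincides with $B_{n}(X,Y)$, which completes the argument. The only point that is not a direct citation is the deduction of order completeness from the Levi property, so I would expect that to be the main obstacle; in particular, the reduction from an arbitrary order bounded above set to a $\tau$-bounded upward directed family inside $Y_{+}$ is where care is needed, since the Levi property as stated in the preliminaries applies only to directed subsets of the positive cone.
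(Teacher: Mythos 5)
Your proposal is correct and takes essentially the same route as the paper, whose entire proof is the single line ``By considering Corollary \ref{13} and \cite[Lemma 2.2]{EGZ}'': identify $B_{n}(X,Y)$ with $B^{b}_{n}(X,Y)$ via Corollary \ref{13}, then apply the cited lemma. Your additional step deducing order completeness of $Y$ from the Levi property (via $A^{\vee}$, a cofinal subset, and translation into $Y_{+}$, using that order bounded sets are $\tau$-bounded) is exactly the bridge needed to satisfy the order-completeness hypothesis of \cite[Lemma 2.2]{EGZ}, a point the paper leaves implicit; you fill it in correctly.
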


\begin{coro}
Suppose $X$ and $Y$ are locally solid vector lattices such that $Y$ possesses $AM$, Fatou, and Levi properties. Then $B_{b}(X,Y)$ is a vector lattice.
\end{coro}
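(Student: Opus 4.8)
The plan is to identify $B_{b}(X,Y)$ with the space $B^{b}_{b}(X,Y)$ of $bb$-bounded \emph{order bounded} operators and then quote \cite[Lemma 2.2]{EGZ}. First I would invoke Corollary \ref{13}: since $Y$ enjoys the $AM$ and Levi properties, every $bb$-bounded operator $T:X\to Y$ is automatically order bounded. Consequently $B_{b}(X,Y)=B^{b}_{b}(X,Y)$ as sets, and since $B^{b}_{b}(X,Y)$ inherits the topology of uniform convergence on bounded sets, the two are the same topological vector space. So it suffices to prove that $B^{b}_{b}(X,Y)$ is a vector lattice.

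The second step is to check the hypotheses of \cite[Lemma 2.2]{EGZ}, which guarantees that $B^{b}_{b}(X,Y)$ is a vector lattice provided the range $Y$ is order complete and has the Fatou property. The Fatou property is assumed, so the only point needing verification is order completeness of $Y$. I expect this to be the main (indeed the only genuine) obstacle, because order completeness is not listed among the hypotheses and must instead be extracted from the Levi property.

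To that end I would show that, in a locally solid vector lattice, the Levi property forces order completeness. Let $A\subseteq Y$ be nonempty and bounded above by some $u$. By the Observation, passing to $A^{\vee}$ changes neither the upper bounds nor the supremum, so we may assume $A$ is upward directed. Fixing $a_{0}\in A$, the cofinal tail $\{a\in A:a\ge a_{0}\}$ has the same supremum as $A$, and after translating by $-a_{0}$ it becomes an upward directed subset of $Y_{+}$ contained in the order interval $[0,u-a_{0}]$. Since the topology is locally solid, order intervals are $\tau$-bounded by \cite[Theorem 2.19]{AB1}, so this translated set is a $\tau$-bounded upward directed subset of $Y_{+}$; the Levi property then yields its supremum, and adding back $a_{0}$ produces $\sup A$. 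Hence $Y$ is order complete.

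Finally, with $Y$ order complete and Fatou, \cite[Lemma 2.2]{EGZ} applies and shows that $B^{b}_{b}(X,Y)$ is a vector lattice; combined with the identification $B_{b}(X,Y)=B^{b}_{b}(X,Y)$ from the first step, this proves that $B_{b}(X,Y)$ is a vector lattice. The routine verifications—that the solid hull of a bounded set is bounded and that translation preserves directedness and suprema—I would leave implicit.
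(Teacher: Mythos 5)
Your proof is correct and takes essentially the same route as the paper: Corollary \ref{13} identifies $B_{b}(X,Y)$ with $B^{b}_{b}(X,Y)$, and then \cite[Lemma 2.2]{EGZ} gives the lattice structure. Your explicit derivation of order completeness of $Y$ from the Levi property (via $A^{\vee}$, translation into $Y_{+}$, and $\tau$-boundedness of order intervals) fills in a step the paper leaves implicit, and it is carried out correctly.
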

\begin{coro}
Suppose $X$ and $Y$ are locally solid vector lattices such that $Y$ possesses $AM$, Fatou, and Levi properties. Then $B_{c}(X,Y)$ is a vector lattice.
\end{coro}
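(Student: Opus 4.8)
The plan is to show that, under the stated hypotheses, the class $B_{c}(X,Y)$ of continuous operators coincides, as an ordered vector space, with the class $B^{b}_{c}(X,Y)$ of continuous order bounded operators, and then to invoke \cite[Lemma 2.2]{EGZ} to conclude that the latter is a vector lattice. Thus the argument naturally splits into a reduction step and a verification that the hypotheses of \cite[Lemma 2.2]{EGZ} are met.

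For the reduction, recall from the preliminaries that every continuous operator between topological vector spaces is $bb$-bounded, i.e. $B_{c}(X,Y)\subseteq B_{b}(X,Y)$. Since $Y$ possesses the $AM$ and Levi properties, Corollary \ref{13} applies and shows that every $bb$-bounded operator $T\colon X\to Y$ is order bounded; consequently every continuous $T\colon X\to Y$ is order bounded, which gives the set-theoretic identity $B_{c}(X,Y)=B^{b}_{c}(X,Y)$. Because the ordering on $B_{c}(X,Y)$ is exactly the one induced on $B^{b}_{c}(X,Y)$, this is in fact an identification of ordered vector spaces, so it suffices to treat $B^{b}_{c}(X,Y)$.

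Now \cite[Lemma 2.2]{EGZ} requires two properties of the range: the Fatou property, which is assumed outright, and order completeness. The one point that needs care — and the main obstacle — is therefore to produce order completeness of $Y$ from the Levi property. I would argue this directly: given a nonempty $A\subseteq Y$ bounded above by some $u$, fix $a_{0}\in A$ and pass to the upward directed set $B=\{(a_{1}\vee\cdots\vee a_{n})\vee a_{0}:n\in\mathbb{N},\,a_{i}\in A\}$, which lies in the order interval $[a_{0},u]$. Then $B-a_{0}$ is an upward directed subset of $Y_{+}$ contained in $[0,u-a_{0}]$; since $Y$ is locally solid, this order interval is $\tau$-bounded by \cite[Theorem 2.19]{AB1}, so $B-a_{0}$ is $\tau$-bounded. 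The Levi property then yields $\sup(B-a_{0})$, hence $\sup B$, and since $B$ and $A$ share the same set of upper bounds one gets $\sup A=\sup B$. Thus every bounded above subset of $Y$ has a supremum, i.e. $Y$ is order complete.

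With $Y$ order complete and Fatou, \cite[Lemma 2.2]{EGZ} gives that $B^{b}_{c}(X,Y)$ is a vector lattice, and by the reduction above so is $B_{c}(X,Y)$. I would finally remark that the same two-step scheme proves the two preceding corollaries verbatim, replacing $B_{c},B^{b}_{c}$ by $B_{n},B^{b}_{n}$ and by $B_{b},B^{b}_{b}$ respectively, using that both $nb$-bounded and $bb$-bounded operators fall under the scope of Corollary \ref{13}.
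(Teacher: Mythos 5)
Your proof is correct and follows essentially the same route as the paper, which derives this corollary by combining Corollary \ref{13} (so that $B_{c}(X,Y)=B^{b}_{c}(X,Y)$ under the $AM$ and Levi properties) with \cite[Lemma 2.2]{EGZ}. In fact your write-up is more careful than the paper's one-line justification, since you explicitly verify the order completeness of $Y$ required by \cite[Lemma 2.2]{EGZ} by deducing it from the Levi property via finite suprema and the $\tau$-boundedness of order intervals --- a step the paper leaves implicit.
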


\begin{prop}\label{14}
Suppose $X$ is a locally solid vector lattice which possesses $AM$ and Levi properties and $Y$ is any locally solid vector lattice. Then, every order bounded operator $T:X\to Y$  is $bb$-bounded.
\end{prop}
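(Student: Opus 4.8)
The plan is to reduce the statement to the single implication ``bounded $\Rightarrow$ order bounded'' in the domain $X$, and then transport this through $T$ and exploit the local solidness of $Y$. Concretely, let $B\subseteq X$ be an arbitrary bounded set; I must show that $T(B)$ is bounded in $Y$.

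First I would argue that $B$ is order bounded in $X$. Replacing $B$ by its solid hull (which remains bounded), I may assume $B$ is solid, so that $B_{+}=\{x\in B: x\geq 0\}$ is bounded as well. By the $AM$-property, $(B_{+})^{\vee}$ is bounded, and by the Observation it may be viewed as an upward directed set in $X_{+}$. The Levi property then yields that $\sup (B_{+})^{\vee}$ exists; since $\sup (B_{+})^{\vee}=\sup B_{+}$, setting $y=\sup B_{+}$ gives $x\leq y$ for every $x\in B_{+}$. Using $B\subseteq B_{+}-B_{+}$, it follows that $B$ lies in the order interval $[-y,y]$, hence $B$ is order bounded. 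This is precisely the argument used for $(i)\to(ii)$ in Theorem \ref{12}, applied verbatim under the present hypotheses.

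With $B$ order bounded, the assumption that $T$ is an order bounded operator immediately gives that $T(B)$ is order bounded in $Y$. Finally, since $Y$ is a locally solid vector lattice, every order bounded set is bounded by \cite[Theorem 2.19]{AB1}; thus $T(B)$ is bounded in $Y$. As $B$ was an arbitrary bounded subset of $X$, this shows that $T$ is $bb$-bounded, completing the proof.

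The only real subtlety — and the step I would be most careful about — is the claim that ``bounded $\Rightarrow$ order bounded'' holds in $X$ without assuming order completeness. Although Theorem \ref{12} is stated under the hypothesis of order completeness, that assumption is needed only for the reverse implication $(ii)\to(i)$ (to manufacture the Levi property from order completeness). In the forward direction the existence of $\sup (B_{+})^{\vee}$ is supplied \emph{directly} by the Levi property itself, so no further completeness assumption is required here; this is why Proposition \ref{14} can dispense with order completeness. Everything else in the argument is routine.
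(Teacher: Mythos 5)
Your proof is correct and follows essentially the same route as the paper: bounded $\Rightarrow$ order bounded in $X$ via the $AM$ and Levi properties, then order boundedness of $T$ gives $T(B)$ order bounded, and local solidness of $Y$ (via \cite[Theorem 2.19]{AB1}) gives $T(B)$ bounded. The one difference is worth noting: the paper's proof simply cites Theorem \ref{12}, whose statement carries the hypothesis that $X$ is order complete --- a hypothesis absent from Proposition \ref{14} --- whereas you inline the relevant half of that argument and observe, correctly, that order completeness is only used in the paper for the implication $(ii)\to(i)$, while the implication you need (bounded $\Rightarrow$ order bounded) is powered entirely by the Levi property applied to the upward directed bounded set $(B_{+})^{\vee}$. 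So your version is not just faithful to the paper but actually repairs its slightly loose citation; nothing further is needed.
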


\begin{proof}
Suppose $B\subseteq X$ is bounded. By Theorem \ref{12}, $B$ is also order bounded. By the assumption, $T(B)$ is order bounded in $Y$ so that bounded by considering this point that $Y$ is locally solid.
\end{proof}
\begin{coro}
Suppose $X$ is a locally solid vector lattice which possesses $AM$ and Levi properties and $Y$ is an order complete locally solid vector lattice with the Fatou property. Then, $B^{b}_{b}(X,Y)=B^{b}(X,Y)$.
\end{coro}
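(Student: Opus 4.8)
The plan is to establish the asserted identity as a double inclusion of sets and then to note that both classes carry the same lattice structure. Recall that $B^{b}_{b}(X,Y)$ consists of the operators that are simultaneously order bounded and $bb$-bounded, while $B^{b}(X,Y)$ denotes all order bounded operators from $X$ into $Y$. The inclusion $B^{b}_{b}(X,Y)\subseteq B^{b}(X,Y)$ is then immediate, since every element of the former is by definition order bounded.

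For the reverse inclusion, I would appeal directly to Proposition \ref{14}. Its hypotheses on the domain are precisely the $AM$ and Levi properties assumed here, and it permits $Y$ to be an arbitrary locally solid vector lattice. Hence, given any order bounded operator $T\in B^{b}(X,Y)$, Proposition \ref{14} guarantees that $T$ is also $bb$-bounded; being both order bounded and $bb$-bounded, $T$ lies in $B^{b}_{b}(X,Y)$. This yields $B^{b}(X,Y)\subseteq B^{b}_{b}(X,Y)$, and with the previous step the equality of the two classes follows.

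The remaining hypotheses on $Y$, namely order completeness together with the Fatou property, are not needed for this set equality but serve to promote it to an equality of vector lattices: by \cite[Lemma 2.2]{EGZ} they make $B^{b}_{b}(X,Y)$ a locally solid vector lattice, while the order completeness of $Y$ renders $B^{b}(X,Y)$ an order complete vector lattice, so that both sides carry their natural order. I therefore expect no genuine obstacle in the corollary itself; all of the difficulty has already been absorbed into Theorem \ref{12} and Proposition \ref{14}, where the $AM$ and Levi properties of $X$ force bounded and order bounded subsets of $X$ to coincide, and thereby force order bounded operators out of $X$ to be $bb$-bounded. The only point demanding care is to ensure that the two classes are compared with respect to the same notion of order boundedness, so that the identification is clean.
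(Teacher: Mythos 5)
Your proposal is correct and follows essentially the same route as the paper: the paper likewise combines the trivial inclusion $B^{b}_{b}(X,Y)\subseteq B^{b}(X,Y)$ with Proposition \ref{14} to conclude $B^{b}(X,Y)\subseteq B_{b}(X,Y)$, forcing equality. Your added remark that the order completeness and Fatou hypotheses on $Y$ are only needed to give the operator space its lattice structure, not for the set equality itself, is accurate and consistent with how the paper's proof actually proceeds.
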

\begin{proof}
By Proposition \ref{14}, $B^{b}_{b}(X,Y)\subseteq B^{b}(X,Y)\subseteq B_{b}(X,Y)$. So, $B^{b}_{b}(X,Y)=B^{b}(X,Y)$.

\end{proof}
\begin{rem}
  We can not expect Proposition \ref{14} for either $nb$-bounded operators or continuous operators. Consider the identity operator on ${\Bbb R}^{\Bbb N}$. It is order bounded but not an $nb$-bounded operator; observe that ${\Bbb R}^{\Bbb N}$ has $AM$ and Levi properties by Proposition \ref{0} and Proposition \ref{15}. Furthermore, suppose $X$ is $\ell_{\infty}$ with absolute weak topology and pointwise ordering and $Y$ is $\ell_{\infty}$ with norm topology and pointwise ordering. Then, the identity operator $I$ from $X$ into $Y$ is order bounded but not continuous.
    Again, observe that $X$ has $AM$ and Levi properties: suppose $B\subseteq X$ is absolutely weakly bounded so that norm bounded; since $(\ell_{\infty},\|.\|)$ is a $C(K)$-space, we see that it has $AM$-property. Also, suppose $(x_{\alpha})$ is an increasing absolutely weakly bounded net in $X_{+}$. So, it is norm bounded; we know that $(\ell_{\infty},\|.\|)$ has the Levi property.

    Nevertheless, we have the following observation. Recall that a metrizable locally solid vector lattice $X$ is called a Fr$\acute{e}$chet space if it is complete.
\end{rem}
\begin{prop}
Suppose $X$ is a Fr$\acute{e}$chet space and $Y$ is an order complete locally solid vector lattice with the Fatou property. Then, $B^{b}_{c}(X,Y)=B^{b}(X,Y)$.
\end{prop}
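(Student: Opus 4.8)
The plan is to first pin down what the identity says. Here $B^{b}(X,Y)$ denotes all order bounded operators and $B^{b}_{c}(X,Y)$ the continuous ones among them, so that $B^{b}_{c}(X,Y)\subseteq B^{b}(X,Y)$ is automatic and the whole content is the reverse inclusion: \emph{every order bounded operator $T:X\to Y$ is continuous}. I would establish this directly, using that $X$ is a complete metrizable locally solid vector lattice and using of $Y$ only that it is locally solid; the order completeness and Fatou property of $Y$ are present so that, via \cite[Lemma 2.2]{EGZ}, both $B^{b}(X,Y)$ and $B^{b}_{c}(X,Y)$ are genuine vector lattices, making the stated equality one of locally solid vector lattices.

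I would argue by contradiction. Assuming $T$ is not continuous, fix a solid monotone $F$-norm $\rho$ inducing $\tau$ (available since $X$ is metrizable and locally solid), and use first countability to extract a null sequence $x_n\to 0$ together with a fixed solid zero neighborhood $V\subseteq Y$ such that $Tx_n\notin V$ for every $n$. The first key step is to trap suitably rescaled vectors inside a single order interval. Passing to a subsequence with $\rho(x_{n_k})\le 4^{-k}$ and setting $\mu_k=2^{k}$, subadditivity and solidity of $\rho$ give $\rho\big(\mu_k|x_{n_k}|\big)\le 2^{k}\rho(x_{n_k})\le 2^{-k}$, so the partial sums of $\sum_k\mu_k|x_{n_k}|$ are $\rho$-Cauchy. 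Completeness of $X$ then produces a limit $u\in X_{+}$, and since these partial sums increase to $u$ while the positive cone of $X$ is $\tau$-closed (a metrizable locally solid vector lattice being Hausdorff), every partial sum is dominated by $u$; in particular $\mu_k|x_{n_k}|\le u$, i.e. $\mu_k x_{n_k}\in[-u,u]$ for all $k$.

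The second step cashes in order boundedness of $T$. Because $[-u,u]$ is order bounded and $T$ is order bounded, $T([-u,u])$ is order bounded, hence $\tau$-bounded as $Y$ is locally solid. Thus $\{\mu_k Tx_{n_k}\}\subseteq T([-u,u])$ is $\tau$-bounded and so absorbed by $V$: there is $s>0$ with $\mu_k Tx_{n_k}\in sV$, that is $Tx_{n_k}\in(s/\mu_k)V$, for every $k$. Since $\mu_k\to\infty$, for all large $k$ we have $s/\mu_k\le 1$, and solidity (hence balancedness) of $V$ gives $(s/\mu_k)V\subseteq V$, whence $Tx_{n_k}\in V$ — contradicting $Tx_{n_k}\notin V$. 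This forces $T$ to be continuous and yields $B^{b}_{c}(X,Y)=B^{b}(X,Y)$.

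The step I expect to demand the most care is the simultaneous choice of subsequence and scalars $\mu_k\to\infty$ keeping $\sum_k\mu_k|x_{n_k}|$ convergent; this is why I decouple the decay rate $4^{-k}$ of $\rho(x_{n_k})$ from the growth rate $2^{k}$ of $\mu_k$, absorbing the non-homogeneity of the $F$-norm through the elementary bound $\rho(m\,\cdot)\le m\,\rho(\cdot)$ valid for positive integers $m$. The other delicate point is the passage from ``$\mu_k x_{n_k}$ is $\rho$-small'' to ``$\mu_k x_{n_k}$ lies in a fixed order interval,'' which rests on $\tau$-closedness of $X_{+}$ and is precisely the bridge between the metric geometry of $X$ and the order boundedness of $T$. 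I would emphasize that no Riesz--Kantorovich decomposition $T=T^{+}-T^{-}$ is invoked: the argument handles an arbitrary order bounded $T$ in one stroke, so neither order completeness nor the Fatou property of $Y$ enters the set-theoretic equality itself.
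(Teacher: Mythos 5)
Your proof is correct, but it takes a genuinely different route from the paper: the paper disposes of this proposition in one line, citing \cite[Theorem 5.19]{AB1} (every order bounded operator from a Fr\'echet lattice into a locally solid vector lattice is continuous) to obtain the chain $B^{b}_{c}(X,Y)\subseteq B^{b}(X,Y)\subseteq B_{c}(X,Y)$, from which the equality is immediate. What you have done, in effect, is open up that black box and re-prove the cited theorem from first principles, via the standard ``blow up and absorb'' argument: extract a null sequence with $Tx_{n}\notin V$ (legitimate, since for a linear map with metrizable domain sequential continuity at zero is equivalent to continuity), use a solid $F$-norm together with completeness and closedness of the positive cone (valid because the space is Hausdorff) to trap the scaled terms $2^{k}|x_{n_{k}}|$ below a single $u\in X_{+}$, and then let order boundedness of $T$ plus absorption of the $\tau$-bounded set $T([-u,u])$ by the solid (hence balanced) neighborhood $V$ produce the contradiction. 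The individual steps check out, including the decoupling of the decay rate $4^{-k}$ from the growth rate $2^{k}$ and the integer-scalar bound $\rho(m\,\cdot)\le m\,\rho(\cdot)$ that absorbs the non-homogeneity of the $F$-norm. The paper's citation is of course shorter; your self-contained version buys two things worth making explicit: it reveals that only metrizability and completeness of $X$ and local solidness of $Y$ are actually used, so that -- as you correctly observe -- order completeness and the Fatou property of $Y$ play no role in the set-theoretic equality itself (in the paper they serve only to ensure, via \cite[Lemma 2.2]{EGZ}, that the two sides carry vector lattice structure), and it treats an arbitrary order bounded $T$ in one stroke, avoiding any decomposition $T=T^{+}-T^{-}$, which would in any case have required order completeness of $Y$.
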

\begin{proof}
By \cite[Theorem 5.19]{AB1}, we conclude that $B^{b}_{c}(X,Y)\subseteq B^{b}(X,Y)\subseteq B_{c}(X,Y)$. Therefore, $B^{b}_{c}(X,Y)=B^{b}(X,Y)$.
\end{proof}

Before, we proceed with an application of $AM$-property, we have the following useful observation. Recall that $B^{b}(X,Y)$ is the space of all order bounded operators from a vector lattice $X$ into a vector lattice $Y$.
\begin{lem}\label{20}
Suppose $X$ and $Y$ are locally solid vector lattices such that $Y$ possesses the Fatou property and is order complete. Then we have the following.
\begin{itemize}
\item[\em (i)] {$B^{b}_{n}(X,Y)$ is an ideal of $B^{b}(X,Y)$}.
\item[\em (ii)]{$B^{b}_{b}(X,Y)$ is an ideal of $B^{b}(X,Y)$}.
\item[\em (iii)] {$B^{b}_{c}(X,Y)$ is an ideal of $B^{b}(X,Y)$}.
\end{itemize}
\end{lem}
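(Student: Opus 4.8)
The plan is to treat the three parts uniformly, writing $B^{b}_{\ast}(X,Y)$ for any one of $B^{b}_{n}(X,Y)$, $B^{b}_{b}(X,Y)$, $B^{b}_{c}(X,Y)$. Since $Y$ is order complete and has the Fatou property, $B^{b}(X,Y)$ is a vector lattice, and by \cite[Lemma 2.2]{EGZ} each $B^{b}_{\ast}(X,Y)$ is itself a vector lattice sitting inside $B^{b}(X,Y)$; in particular it is a linear subspace and is closed under the modulus, so that $T\in B^{b}_{\ast}(X,Y)$ forces $|T|\in B^{b}_{\ast}(X,Y)$. Because an ideal is exactly a solid linear subspace, and the subspace part is already supplied by the cited lemma, to prove each statement it remains only to verify solidity: if $S\in B^{b}(X,Y)$ and $T\in B^{b}_{\ast}(X,Y)$ satisfy $|S|\le|T|$, then $S\in B^{b}_{\ast}(X,Y)$.

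First I would record the single pointwise inequality on which everything rests. Put $Q=|T|$, which is a positive member of $B^{b}_{\ast}(X,Y)$ by the previous paragraph. For an arbitrary $x\in X$ the Riesz--Kantorovich formula gives $|Sx|\le|S|(|x|)$, while $|S|\le|T|$ evaluated at $|x|\ge 0$ yields $|S|(|x|)\le|T|(|x|)=Q|x|$. Hence $|Sx|\le Q|x|$ for every $x\in X$. I would then transfer the relevant boundedness of $Q$ to $S$ using only local solidity, choosing all neighborhoods solid (which the standing convention permits). For the $nb$-case, fix a solid zero neighborhood $U\subseteq X$ with $Q(U)$ bounded; for $x\in U$ we have $|x|\in U$, so $Q|x|\in Q(U)$, and for any solid zero neighborhood $V\subseteq Y$ there is $\lambda>0$ with $Q(U)\subseteq\lambda V$, whence $|Sx|\le Q|x|\in\lambda V$ and, by solidity of $\lambda V$, $Sx\in\lambda V$; thus $S(U)\subseteq\lambda V$ for every such $V$, and $S(U)$ is bounded, so $S$ is $nb$-bounded. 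The $bb$-case is identical with a bounded solid $B$ in place of $U$ (passing to the solid hull of $B$ if necessary), and the continuous case is the same once, for a given solid $V\subseteq Y$, one selects a solid $U\subseteq X$ with $Q(U)\subseteq V$ and concludes $S(U)\subseteq V$.

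The genuinely substantive input is the first paragraph, namely that each class is a sublattice of $B^{b}(X,Y)$ and therefore contains the moduli of its members; this is precisely where the order completeness and the Fatou property of $Y$ enter, through \cite[Lemma 2.2]{EGZ}. Once $|T|$ is known to lie in the class, the solidity argument above is completely elementary, using nothing beyond the inequality $|Sx|\le|T|(|x|)$ together with the freedom to work with solid neighborhoods in both $X$ and $Y$. I therefore expect the only delicate point to be confirming that the modulus computed in $B^{b}(X,Y)$ coincides with the lattice operations internal to $B^{b}_{\ast}(X,Y)$, so that the phrase ``contains its modulus'' is legitimate; granting the cited lemma this is immediate, and no separate appeal to the $AM$- or Levi properties is needed here.
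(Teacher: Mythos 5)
Your proposal is correct, but it is organized quite differently from the paper's own proof, and it is worth seeing what each route buys. The paper does \emph{not} invoke \cite[Lemma 2.2]{EGZ} inside this proof at all: given $|T|\leq|S|$ with $S$ in the class, it fixes a solid zero neighborhood $U$ (with $S(U)$ bounded, resp.\ $S(U)\subseteq V$), expands $|S|(x)=\sup\{|S(u)|:|u|\leq x\}$ by Riesz--Kantorovich for $x\in U_{+}$, observes that each $S(u)$ lies in $\gamma V$ because $U$ is solid, and then uses the \emph{order closedness} of $V$ (this is exactly where the Fatou property enters) to conclude $|S|(x)\in\gamma V$, hence $|T|(x)\in\gamma V$ and $T(x)\in\gamma V$; the inclusion $U\subseteq U_{+}-U_{+}$ finishes. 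You instead take the closure of the class under the modulus as a black box from \cite[Lemma 2.2]{EGZ}, set $Q=|T|$, prove the pointwise bound $|Sx|\leq Q|x|$, and transfer boundedness or continuity from $Q$ to $S$ using nothing but solidity of neighborhoods. Your write-up is cleaner and more uniform across the three cases (in particular it avoids the paper's $T(U_{+})-T(U_{+})$ and $V-V\subseteq W$ recombinations, since you work with $|x|$ rather than splitting into positive parts), but it shifts the entire weight of the hypotheses --- order completeness and Fatou --- into the citation, and it genuinely depends on the stronger, sublattice reading of that lemma: that the modulus computed in $B^{b}(X,Y)$ of a member of the class again lies in the class, which is precisely the delicate point you flag. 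That reading is the correct one (it is how the cited lemma is proved, and the paper's own Remark uses it this way), so your argument stands; note also that the paper's direct argument delivers that sublattice fact as a byproduct (apply the domination statement to $T=|S|$), whereas your argument consumes it as input.
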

\begin{proof}
$(i)$. Assume $|T|\leq |S|$ where $T$ is order bounded and $S\in B^{b}_{n}(X,Y)$. There exists a zero neighborhood $U\subseteq X$ such that $S(U)$ is bounded. So, for each zero neighborhood $V\subseteq Y$, there is a positive scalar $\gamma$ with $S(U)\subseteq \gamma V$. Since $U$ is solid, for any $y\in U$, $y^{+}, y^{-}, |y|\in U$. Fix any $x\in U_{+}$. Then $|T|(x)\leq |S|(x)$. In addition, by the Riesz-Kantorovich formulae, $|S|(x)=\sup\{|S(u)|:|u|\leq x\}$. Since $U$ is solid and $V$ is order closed, we conclude that $|S|(x)\in \gamma V$ so that $|T|(x)\in \gamma V$. Since $|T(x)|\leq |T|(x)$, we see that $|T(x)|\in\gamma V$. So, $T(x)\in\gamma V$. Therefore, $T(U_{+})\subseteq \gamma V$. Since $U\subseteq U_{+}-U_{+}$, we conclude that $T(U)$ is also bounded.

$(ii)$. It is similar to the proof of $(i)$. Just, observe that for a bounded set $B\subseteq X$, W.L.O.G, we may assume that $B$ is solid; otherwise, consider the solid hull of $B$ which is also bounded.

$(iii)$. Assume $|T|\leq |S|$ where $T$ is order bounded and $S\in B^{b}_{c}(X,Y)$. Choose arbitrary zero neighborhood $W\subseteq Y$. There is a zero neighborhood $V$ with $V-V\subseteq W$. Find any neighborhood $U$ such that $S(U)\subseteq V$. Fix any $x\in U_{+}$. Then, $|T|(x)\leq |S|(x)$. In addition, by the Riesz-Kantorovich formulae, $|S|(x)=\sup\{|S(u)|:|u|\leq x\}$. Since $U$ is solid and also $V$ and $W$ are order closed, we conclude that $|S|(x)\in V$ so that $|T|(x)\in V$. Since $|T(x)|\leq |T|(x)$, we see that $|T(x)|\in V$. So, $T(x)\in V$. Therefore, $T(U_{+})\subseteq  V$. Since $U\subseteq U_{+}-U_{+}$, we conclude that $T(U)\subseteq T(U_{+})-T(U_{+})\subseteq V-V\subseteq W$, as desired.
\end{proof}
As a consequence, we state a domination property for each class of bounded order bounded operators.
\begin{coro}
Suppose $X$ and $Y$ are locally solid vector lattices such that $Y$ possesses the Fatou property and is order complete. Moreover, assume that $T,S:X\to Y$ are operators such that $0\leq T\leq S$. Then we have the following.
\begin{itemize}
\item[\em (i)] {If $S\in B^{b}_{n}(X,Y)$ then  $T\in B^{b}_{n}(X,Y)$}.
\item[\em (ii)]{If $S\in B^{b}_{b}(X,Y)$ then  $T\in B^{b}_{b}(X,Y)$}.
\item[\em (iii)] {If $S\in B^{b}_{c}(X,Y)$ then  $T\in B^{b}_{c}(X,Y)$}.
\end{itemize}
\end{coro}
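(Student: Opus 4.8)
The plan is to deduce this corollary directly from Lemma \ref{20}, so that all the real work reduces to translating the hypothesis $0 \le T \le S$ into the ideal condition $|T| \le |S|$ inside the vector lattice $B^{b}(X,Y)$. First I would record the elementary fact that every positive operator is order bounded: if $0 \le R$ and $[a,b]$ is any order interval in $X$, then $u \in [a,b]$ forces $Ra \le Ru \le Rb$, whence $R([a,b]) \subseteq [Ra,Rb]$. Applying this to $R = T$ and to $R = S$ shows that both operators lie in $B^{b}(X,Y)$; since $Y$ is order complete, this space is a vector lattice and its lattice operations are at our disposal.

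Next I would note that, because $0 \le T \le S$, in $B^{b}(X,Y)$ we have $|T| = T \le S = |S|$, that is, $|T| \le |S|$. Each of the three cases then follows by invoking the corresponding part of Lemma \ref{20}, which asserts that $B^{b}_{n}(X,Y)$, $B^{b}_{b}(X,Y)$, and $B^{b}_{c}(X,Y)$ are ideals of $B^{b}(X,Y)$. For (i), the hypothesis $S \in B^{b}_{n}(X,Y)$ together with $|T| \le |S|$ and the ideal property immediately yields $T \in B^{b}_{n}(X,Y)$; cases (ii) and (iii) are handled identically, appealing to parts (ii) and (iii) of Lemma \ref{20} respectively.

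I do not anticipate any genuine obstacle here, since the statement is presented precisely as a consequence of the ideal structure already established. The only point warranting a moment's care is ensuring that $T$ and $S$ actually belong to $B^{b}(X,Y)$ so that Lemma \ref{20} is applicable, and this is exactly the observation that positivity forces order boundedness; once that is in place, the conclusion is an immediate reading of the ideal property.
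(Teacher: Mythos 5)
Your proposal is correct and follows exactly the route the paper intends: the corollary is stated as an immediate consequence of Lemma \ref{20}, and your argument (positivity gives order boundedness, so $|T|=T\leq S=|S|$ in $B^{b}(X,Y)$, and then the ideal property of each class yields the conclusion) is precisely that deduction, spelled out. No gaps; the only care point you flag --- that $T,S$ must lie in $B^{b}(X,Y)$ for the ideal property to apply --- is handled correctly.
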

\begin{rem}
We have seen in Lemma \ref{20} that each class of bounded order bounded operators is an ideal in the space of all order bounded operators. So, one interesting question is to ask whether or not the assumed space of operators forms a band. The answer is negative:

Suppose $X$ is ${\Bbb R}^{\Bbb N}$ with product topology and pointwise ordering and $P_n$ is the $n$-th projection on $X$. Then, each $P_n$ is $nb$-bounded as well as order bounded. In addition, $P_n\uparrow I$, where $I$ is the identity operator on $X$. But $I$ is not $nb$-bounded.
Furthermore, suppose $X$ is $\ell_{\infty}$ with absolute weak topology and pointwise ordering and $Y$ is $\ell_{\infty}$ with norm topology and pointwise ordering. Again, assume that $P_n$ is the $n$-th projection from $X$ into $Y$. It is easy to see that each $P_n$ is continuous as well as order bounded; moreover, $P_n\uparrow I$, where $I$ is the identity operator from $X$ into $Y$. But $I$ is not continuous, certainly.

\end{rem}

\begin{thm}
Suppose $X$ is a locally solid-convex vector lattice and $Y$ is an order complete locally solid vector lattice with the Fatou property. Then $B^{b}_{n}(X,Y)$ has the Levi property if and only if so is $Y$.
\end{thm}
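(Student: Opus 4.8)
The plan is to prove both implications by transporting the Levi property along the canonical embedding of $Y$ into $B^{b}_{n}(X,Y)$ by rank-one operators, together with a pointwise-supremum construction of a limit operator. Throughout I use that, by Lemma \ref{20} and \cite[Lemma 2.2]{EGZ}, the order completeness and Fatou property of $Y$ make $B^{b}_{n}(X,Y)$ a locally solid vector lattice with the pointwise order, so ``$T\geq 0$'' means $T(X_{+})\subseteq Y_{+}$.

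For the direction ``$B^{b}_{n}(X,Y)$ Levi $\Rightarrow$ $Y$ Levi'' I would first exploit local convexity of $X$: since $X$ is locally convex and locally solid, its topological dual is a nonzero ideal of the order dual, so I can fix a nonzero positive continuous functional $\phi$ on $X$ and, after rescaling, a point $x_{0}\in X_{+}$ with $\phi(x_{0})=1$. For $y\in Y$ set $T_{y}(x)=\phi(x)y$. One checks $T_{y}\in B^{b}_{n}(X,Y)$: choosing a solid zero neighborhood $U$ with $\phi(U)\subseteq[-1,1]$ forces $T_{y}(U)\subseteq[-y,y]$ for $y\geq 0$ (so $T_{y}$ is $nb$-bounded), and $T_{y}$ is order bounded because $\phi$ is. The map $y\mapsto T_{y}$ is positive and order reflecting. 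Given a bounded increasing net $(y_{\alpha})$ in $Y_{+}$, the net $(T_{y_{\alpha}})$ is increasing and, since $\bigcup_{\alpha}T_{y_{\alpha}}(U)$ lies in the (bounded) solid hull of $\{y_{\alpha}\}$, bounded in $B^{b}_{n}(X,Y)$; by hypothesis its supremum $S$ exists. Evaluating at $x_{0}$ gives $S(x_{0})\geq y_{\alpha}$ for all $\alpha$, and if $z\geq y_{\alpha}$ for all $\alpha$ then $T_{z}\geq T_{y_{\alpha}}$ forces $T_{z}\geq S$, whence $z=T_{z}(x_{0})\geq S(x_{0})$. Thus $S(x_{0})=\sup_{\alpha}y_{\alpha}$, so $Y$ is Levi.

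For the main direction ``$Y$ Levi $\Rightarrow$ $B^{b}_{n}(X,Y)$ Levi'' I take a bounded increasing net $(T_{\alpha})$ in $(B^{b}_{n}(X,Y))_{+}$. Boundedness in the topology of uniform convergence on some zero neighborhood provides a solid zero neighborhood $U_{0}\subseteq X$ with $D:=\bigcup_{\alpha}T_{\alpha}(U_{0})$ bounded in $Y$. For each $x\in X_{+}$ the net $(T_{\alpha}(x))$ is increasing and, after scaling $x$ into $U_{0}$, bounded in $Y_{+}$; the Levi property of $Y$ then yields $T(x):=\sup_{\alpha}T_{\alpha}(x)$. I would extend $T$ to a positive operator on $X=X_{+}-X_{+}$ by the usual Kantorovich additivity argument (using that suprema of increasing nets over a common directed index set add). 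Order boundedness of $T$ is immediate from $|T(a)|\leq T(|a|)$, and $T=\sup_{\alpha}T_{\alpha}$ in $B^{b}_{n}(X,Y)$ follows because any upper bound $S$ satisfies $S(x)\geq\sup_{\alpha}T_{\alpha}(x)=T(x)$ pointwise on $X_{+}$.

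The hard part will be showing that the pointwise supremum $T$ is again $nb$-bounded, i.e. that $T(U_{0})$ is bounded; this is exactly where the Fatou property of $Y$ is indispensable. I would run the boundedness test only against solid order closed zero neighborhoods $V$ of $Y$: boundedness of $D$ gives $D\subseteq\lambda V$ for some $\lambda$, and since $T_{\alpha}(x)\uparrow T(x)$ while $V$ is order closed, the limit $T(x)$ remains in $\lambda V$ for every $x\in(U_{0})_{+}$; solidity of $U_{0}$ (via $U_{0}\subseteq (U_{0})_{+}-(U_{0})_{+}$) then passes boundedness of $T((U_{0})_{+})$ to $T(U_{0})$. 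I expect the only subtle bookkeeping to be the precise meaning of a bounded net of $nb$-bounded operators and this order-closedness passage; order completeness of $Y$ guarantees the pointwise suprema exist throughout.
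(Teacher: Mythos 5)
Your proposal is correct and takes essentially the same route as the paper's own proof: for the direction where $Y$ is Levi you form the pointwise supremum $T(x)=\sup_{\alpha}T_{\alpha}(x)$ using a common zero neighborhood $U_{0}$, extend it by the Kantorovich additivity argument, and use the solid order closed (Fatou) neighborhoods of $Y$ to pass $T_{\alpha}(U_{0})\subseteq\lambda V$ to $T(U_{0})\subseteq\lambda V$; for the converse you embed $Y$ via the rank-one operators $x\mapsto f(x)y_{\alpha}$ built from a positive continuous functional supplied by local convexity, exactly as the paper does. The only (harmless) difference is that you justify $S(x_{0})=\sup_{\alpha}y_{\alpha}$ by testing upper bounds through the order-reflecting embedding $y\mapsto T_{y}$, where the paper simply asserts $T_{\alpha}(x_{0})\uparrow T(x_{0})$.
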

\begin{proof}
Suppose $(T_{\alpha})$ is a bounded increasing net in ${B^{b}_{n}(X,Y)}_{+}$. This implies that there is a zero neighborhood $U\subseteq X$ such that $(T_{\alpha}(U))$ is bounded for each $\alpha$. So, for each $x\in X_{+}$, the net $(T_{\alpha}(x))$ is bounded and increasing in $Y_{+}$ so that it has a supremum, namely, $\alpha_x$. Define $T_{\alpha}:X_{+}\to Y_{+}$ via $T_{\alpha}(x)=\alpha_x$. It is an additive map; it is easy to see that $\alpha_{x+y}\leq \alpha_x+\alpha_y$. For the converse, fix any $\alpha_0$. For each $\alpha\geq\alpha_0$, we have $T_{\alpha}(x)\leq \alpha_{x+y}-T_{\alpha}(y)\leq \alpha_{x+y}-T_{\alpha_0}(y)$ so that $\alpha_x\leq \alpha_{x+y}-T_{\alpha_0}(y)$. Since $\alpha_0$ was arbitrary, we conclude that $\alpha_{x}+\alpha_{y}\leq\alpha_{x+y}$. By \cite[Theorem 1.10]{AB}, it extends to a positive operator $T:X\to Y$. It is enough to show that $T\in B^{b}_{n}(X,Y)$. It is clear that $T$ is order bounded. Suppose $V$ is an arbitrary zero neighborhood in $Y$. There is a positive scalar $\gamma$ with $T_{\alpha}(U)\subseteq \gamma V$. This means that $T(U)\subseteq \gamma V$ since $V$ is order closed.

For the converse, assume that $(y_{\alpha})$ is a bounded increasing net in $Y_{+}$. Pick any $0\neq x_0\in X_{+}$.  By the Hahn-Banach theorem, there exists $f\in X_{+}^{*}$ such that $f(x_0)=1$. Define $T_{\alpha}:X\to Y$ with $T_{\alpha}(x)=f(x)y_{\alpha}$. It is easy to see that each $T_{\alpha}$ is $nb$-bounded as well as order bounded. There exists a zero neighborhood $U\subseteq X$ such that $|f(x)|\leq 1$ for each $x\in U$. It follows that $(T_{\alpha})$ is bounded and increasing. Thus, by the assumption, $T_{\alpha}\uparrow T$ for some $T\in B^{b}_{n}(X,Y)$. Therefore, $T_{\alpha}(x_0)\uparrow T(x_0)$; that is $y_{\alpha}\uparrow T(x_0)$, as claimed.
\end{proof}
\begin{thm}
Suppose $X$ is a locally solid-convex vector lattice and $Y$ is an order complete locally solid vector lattice with the Fatou property. Then $B^{b}_{b}(X,Y)$ has the Levi property if and only if so is $Y$.
\end{thm}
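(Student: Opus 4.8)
The plan is to mirror the two-part argument used for $B^{b}_{n}(X,Y)$ in the preceding theorem, replacing everywhere the control on a single zero neighborhood by control on arbitrary bounded sets, which is the only feature distinguishing the $B_{b}$-topology from the $B_{n}$-topology. So I would first show that the Levi property of $Y$ forces the Levi property of $B^{b}_{b}(X,Y)$, and then establish the converse by probing $B^{b}_{b}(X,Y)$ with rank-one operators built from a positive continuous functional.

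For the forward implication, let $(T_{\alpha})$ be a bounded increasing net in $B^{b}_{b}(X,Y)_{+}$. Boundedness in the topology of uniform convergence on bounded sets means that $\bigcup_{\alpha}T_{\alpha}(B)$ is bounded in $Y$ for every bounded $B\subseteq X$; taking $B=\{x\}$ for $x\in X_{+}$ shows that $(T_{\alpha}(x))$ is a bounded increasing net in $Y_{+}$, which has a supremum by the Levi property of $Y$. I would set $T(x)=\sup_{\alpha}T_{\alpha}(x)$ and use $T_{\alpha}(x+y)=T_{\alpha}(x)+T_{\alpha}(y)$ together with the identity $\sup_{\alpha}(a_{\alpha}+b_{\alpha})=\sup_{\alpha}a_{\alpha}+\sup_{\alpha}b_{\alpha}$ for increasing nets over a common index set to see that $T$ is additive and positively homogeneous on $X_{+}$. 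By \cite[Theorem 1.10]{AB} it then extends to a positive, hence order bounded, operator $T\colon X\to Y$.

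The main obstacle is verifying that $T$ is genuinely $bb$-bounded, i.e. that $T\in B^{b}_{b}(X,Y)$, and this is exactly where the Fatou property of $Y$ is used. Given a bounded $B\subseteq X$ I may assume it solid, and I fix a solid order closed zero neighborhood $V\subseteq Y$; boundedness of the net furnishes a scalar $\gamma$ with $T_{\alpha}(B)\subseteq\gamma V$ for all $\alpha$. For $x\in B_{+}$ the net $(T_{\alpha}(x))$ increases to $T(x)$, so since each term lies in the order closed set $\gamma V$, the limit $T(x)$ lies in $\gamma V$ as well; thus $T(B_{+})\subseteq\gamma V$, and $B\subseteq B_{+}-B_{+}$ gives that $T(B)$ is bounded. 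It remains to note that $T$ is the least upper bound of $(T_{\alpha})$: any dominating operator dominates the pointwise supremum, so $T=\sup_{\alpha}T_{\alpha}$ in $B^{b}(X,Y)$, and since $B^{b}_{b}(X,Y)$ is an ideal of $B^{b}(X,Y)$ by Lemma \ref{20}, this supremum is attained inside $B^{b}_{b}(X,Y)$.

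For the converse, assume $B^{b}_{b}(X,Y)$ has the Levi property and let $(y_{\alpha})$ be a bounded increasing net in $Y_{+}$. I would fix $0\neq x_{0}\in X_{+}$ and invoke the Hahn--Banach theorem, available since $X$ is locally solid-convex, to obtain $f\in X_{+}^{*}$ with $f(x_{0})=1$, then define $T_{\alpha}(x)=f(x)y_{\alpha}$. Each $T_{\alpha}$ is order bounded and $bb$-bounded because continuity of $f$ makes $f(B)$ a bounded subset of $\mathbb{R}$ for every bounded $B$; the net $(T_{\alpha})$ is increasing since $f\geq 0$ and is bounded in the $B_{b}$-topology, because on a bounded $B$ one has $|T_{\alpha}(x)|\leq M y_{\alpha}$ for a suitable $M$, whence local solidity of $Y$ and boundedness of $(y_{\alpha})$ make $\bigcup_{\alpha}T_{\alpha}(B)$ bounded. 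The hypothesis then gives $T_{\alpha}\uparrow T$ for some $T\in B^{b}_{b}(X,Y)$; since this supremum is computed pointwise (Lemma \ref{20} identifies it with the supremum in $B^{b}(X,Y)$), evaluation at $x_{0}$ yields $y_{\alpha}=T_{\alpha}(x_{0})\uparrow T(x_{0})$, so $(y_{\alpha})$ has a supremum and $Y$ has the Levi property.
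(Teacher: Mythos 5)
Your proof is correct and follows essentially the same route as the paper's: pointwise suprema via the Levi property of $Y$, Kantorovich extension by \cite[Theorem 1.10]{AB}, order-closedness of Fatou neighborhoods to get $bb$-boundedness of the limit operator, and rank-one operators $x\mapsto f(x)y_{\alpha}$ built from Hahn--Banach for the converse. In fact you are somewhat more careful than the paper on two points it glosses over, namely verifying that the constructed $T$ really is $\sup_{\alpha}T_{\alpha}$ in $B^{b}_{b}(X,Y)$ and that the rank-one net is bounded in the $B_{b}$-topology.
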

\begin{proof}
Suppose $(T_{\alpha})$ is a bounded increasing net in ${B^{b}_{b}(X,Y)}_{+}$. Fix a bounded set $B\subseteq X$. This implies that the set $(T_{\alpha}(B))$ is bounded for each $\alpha$. So, for each $x\in X_{+}$, the net $(T_{\alpha}(x))$ is bounded and increasing in $Y_{+}$ so that it has a supremum, namely, $\alpha_x$. Define $T_{\alpha}:X_{+}\to Y_{+}$ via $T_{\alpha}(x)=\alpha_x$. It is an additive map. By \cite[Theorem 1.10]{AB}, it extends to a positive operator $T:X\to Y$. It is sufficient to show that $T\in B^{b}_{b}(X,Y)$. It is clear that $T$ is order bounded. Suppose $V$ is an arbitrary zero neighborhood in $Y$. There is a positive scalar $\gamma$ with $T_{\alpha}(B)\subseteq \gamma V$. This means that $T(B)\subseteq \gamma V$ since $V$ is order closed.

For the converse, assume that $(y_{\alpha})$ is a bounded increasing net in $Y_{+}$. Pick any $0\neq x_0\in X_{+}$.  By the Hahn-Banach theorem, there exists $f\in X_{+}^{*}$ such that $f(x_0)=1$. Define $T_{\alpha}:X\to Y$ with $T_{\alpha}(x)=f(x)y_{\alpha}$. It is easy to see that each $T_{\alpha}$ is $bb$-bounded as well as order bounded. Fix a bounded set $B\subseteq X$. Without loss of generality, we may assume that $|f(x)|\leq 1$ for each $x\in B$. It follows that $(T_{\alpha})$ is bounded and increasing. Thus, by the assumption, $T_{\alpha}\uparrow T$ for some $T\in B^{b}_{b}(X,Y)$. Therefore, $T_{\alpha}(x_0)\uparrow T(x_0)$; that is $y_{\alpha}\uparrow T(x_0)$, as desired.
\end{proof}
\begin{thm}
Suppose $X$ is a locally solid-convex vector lattice and $Y$ is an order complete locally solid vector lattice with the Fatou property. Then $B^{b}_{c}(X,Y)$ has the Levi property if and only if so is $Y$.
\end{thm}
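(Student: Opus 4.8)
The plan is to follow the blueprint of the two preceding Levi-property theorems for $B^{b}_{n}(X,Y)$ and $B^{b}_{b}(X,Y)$ almost verbatim, the only genuinely new ingredient being that the operator space now carries the topology of equicontinuous convergence rather than uniform convergence on a neighborhood or on bounded sets. I would treat the two implications separately.

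For the implication that the Levi property of $B^{b}_{c}(X,Y)$ forces it on $Y$, I would reuse the rank-one construction. Given a bounded increasing net $(y_{\alpha})$ in $Y_{+}$, pick $0\neq x_0\in X_{+}$ and use the Hahn-Banach theorem (legitimate since $X$ is locally solid-convex) to produce $f\in X_{+}^{*}$ with $f(x_0)=1$, then set $T_{\alpha}(x)=f(x)y_{\alpha}$. Each $T_{\alpha}$ is continuous because $f$ is continuous and $y_{\alpha}$ is fixed, and it is order bounded because $f$ is a positive functional; the net is positive and increasing since $f\geq 0$ and $y_{\alpha}\uparrow$. To see that $(T_{\alpha})$ is bounded in the equicontinuous-convergence topology I would use continuity of $f$ to find a zero neighborhood $U$ with $f(U)\subseteq[-1,1]$, so that on $U$ the operators are dominated by the bounded family $(y_{\alpha})$, which yields equicontinuous boundedness. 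Applying the hypothesis gives $T_{\alpha}\uparrow T$ for some $T\in B^{b}_{c}(X,Y)$; since $B^{b}_{c}(X,Y)$ is an ideal of the order complete lattice $B^{b}(X,Y)$ (Lemma \ref{20}(iii)), this supremum is computed pointwise, so evaluating at $x_0$ gives $y_{\alpha}=T_{\alpha}(x_0)\uparrow T(x_0)$, as needed.

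For the converse, let $(T_{\alpha})$ be a bounded increasing net in $B^{b}_{c}(X,Y)_{+}$. For each $x\in X_{+}$ the net $(T_{\alpha}(x))$ is increasing and bounded in $Y_{+}$, so by the Levi property of $Y$ it has a supremum $\alpha_x$; the map $x\mapsto\alpha_x$ is additive, hence extends by \cite[Theorem 1.10]{AB} to a positive operator $T:X\to Y$ with $T_{\alpha}(x)\uparrow T(x)$ on $X_{+}$. Order boundedness of $T$ is immediate from positivity. The crux is continuity of $T$, and here I would mimic the argument of Lemma \ref{20}(iii): translate boundedness of $(T_{\alpha})$ in the equicontinuous topology into equicontinuity of the family, so that for a given zero neighborhood $W\subseteq Y$ --- choosing first an order closed $V$ with $V-V\subseteq W$ --- there is a solid zero neighborhood $U\subseteq X$ with $T_{\alpha}(U)\subseteq V$ for all $\alpha$. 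For $x\in U_{+}$, since $T_{\alpha}(x)\uparrow T(x)$ in order and $V$ is order closed, the limit satisfies $T(x)\in V$; then solidity of $U$ gives $U\subseteq U_{+}-U_{+}$, whence $T(U)\subseteq V-V\subseteq W$, establishing continuity and therefore $T\in B^{b}_{c}(X,Y)$.

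The step I expect to be the main obstacle is precisely this last verification: extracting a genuine equicontinuity statement from the mere boundedness of the net $(T_{\alpha})$ in the equicontinuous-convergence topology, and then pushing the resulting containment through the order limit. Unlike the $nb$-case, where one wants $T(U)$ bounded for a single $U$, here continuity only requires, for each target neighborhood $W$, a possibly different source neighborhood $U$; the order-closedness supplied by the Fatou property of $Y$ is exactly what lets the inclusion $T_{\alpha}(U)\subseteq V$ survive passage to the supremum $T$.
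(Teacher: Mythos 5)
Your proposal is correct and follows essentially the same route as the paper: for necessity, the identical rank-one construction $T_{\alpha}(x)=f(x)y_{\alpha}$ via Hahn--Banach with equicontinuity of the family obtained from $|f|\leq 1$ on a neighborhood, and for sufficiency, the identical pointwise-supremum plus Kantorovich-extension argument, with order-closedness of the range neighborhoods (the Fatou property) letting the inclusion $T_{\alpha}(U)\subseteq V$ pass to the limit operator $T$. Your added care at the end (choosing $V-V\subseteq W$ and working on $U_{+}$, in the spirit of Lemma \ref{20}) merely makes explicit a step the paper leaves to the reader.
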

\begin{proof}
Suppose $(T_{\alpha})$ is a bounded increasing net in ${B^{b}_{c}(X,Y)}_{+}$. This implies that the net $(T_{\alpha})$ is equicontinuous. So, for any zero neighborhood $V\subseteq Y$, there is a zero neighborhood $U\subseteq X$ such that $T_{\alpha}(U)\subseteq V$ for each $\alpha$. By \cite[Theorem 2.4]{Ru}, for each $x\in X_{+}$, the net $(T_{\alpha}(x))$ is bounded and increasing in $Y_{+}$ so that it has a supremum, namely, $\alpha_x$. Define $T_{\alpha}:X_{+}\to Y_{+}$ via $T_{\alpha}(x)=\alpha_x$. It is an additive map. By \cite[Theorem 1.10]{AB}, it extends to a positive operator $T:X\to Y$. It suffices to prove that $T\in B^{b}_{c}(X,Y)$. It is clear that $T$ is order bounded. One may verify that $T(U)\subseteq V$ since $V$ is order closed.

For the converse, assume that $(y_{\alpha})$ is a bounded increasing net in $Y_{+}$. Pick any $0\neq x_0\in X_{+}$.  By the Hahn-Banach theorem, there exists $f\in X_{+}^{*}$ such that $f(x_0)=1$. Define $T_{\alpha}:X\to Y$ with $T_{\alpha}(x)=f(x)y_{\alpha}$. It is easy to see that each $T_{\alpha}$ is continuous as well as order bounded. There exists a zero neighborhood $U\subseteq X$ such that $|f(x)|\leq 1$ for each $x\in U$. It follows that $(T_{\alpha})$ is equicontinuous (bounded) and increasing. Thus, by the assumption, $T_{\alpha}\uparrow T$ for some $T\in B^{b}_{c}(X,Y)$. Therefore, $T_{\alpha}(x_0)\uparrow T(x_0)$; that is $y_{\alpha}\uparrow T(x_0)$, as claimed.
\end{proof}

\begin{prop}
Suppose $X$ and $Y$ are locally solid vector lattices such that $X$ is locally convex and $Y$ has the Fatou property and is order complete. If $B^{b}_{n}(X,Y)$ has the Lebesgue property, then so is $Y$.
\end{prop}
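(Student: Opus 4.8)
The plan is to transfer the Lebesgue property from the operator space down to $Y$ by embedding $Y$ into $B^{b}_{n}(X,Y)$ as rank-one operators, exactly as in the converse directions of the preceding Levi-property theorems. Suppose $(y_{\alpha})$ is a net in $Y$ with $y_{\alpha}\downarrow 0$; I must show $y_{\alpha}\xrightarrow{\tau}0$. First I would fix some $0\neq x_0\in X_{+}$ and, using that $X$ is locally convex, invoke the Hahn--Banach theorem to produce a positive continuous functional $f\in X_{+}^{*}$ with $f(x_0)=1$. Define $T_{\alpha}:X\to Y$ by $T_{\alpha}(x)=f(x)y_{\alpha}$. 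Each $T_{\alpha}$ is order bounded (being rank one, it carries order intervals into order bounded sets) and $nb$-bounded: choosing a zero neighborhood $U\subseteq X$ on which $|f|\leq 1$, the set $T_{\alpha}(U)$ lies in the order interval $[-y_{\alpha},y_{\alpha}]$ and hence is bounded since $Y$ is locally solid. Thus $T_{\alpha}\in B^{b}_{n}(X,Y)$, and the hypotheses on $Y$ (order completeness and the Fatou property) guarantee, via the earlier corollaries, that $B^{b}_{n}(X,Y)$ is a locally solid vector lattice on which the Lebesgue property is meaningful.

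The key step is to verify that $T_{\alpha}\downarrow 0$ in $B^{b}_{n}(X,Y)$. Monotonicity is immediate: for $\alpha\leq\beta$ and $x\in X_{+}$ one has $(T_{\alpha}-T_{\beta})(x)=f(x)(y_{\alpha}-y_{\beta})\geq 0$ since $f\geq 0$ and $y_{\alpha}\geq y_{\beta}$, so $T_{\alpha}\geq T_{\beta}$. To see the infimum is $0$, let $S$ be any operator with $0\leq S\leq T_{\alpha}$ for all $\alpha$. Then for each $x\in X_{+}$ we have $0\leq S(x)\leq f(x)y_{\alpha}$, and taking the infimum over $\alpha$ gives $S(x)\leq f(x)\inf_{\alpha}y_{\alpha}=0$; hence $S(x)=0$ on $X_{+}$ and $S=0$. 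Because $B^{b}_{n}(X,Y)$ is an ideal in $B^{b}(X,Y)$ by Lemma \ref{20}, this infimum, computed in $B^{b}(X,Y)$, is also the infimum in $B^{b}_{n}(X,Y)$, so indeed $T_{\alpha}\downarrow 0$ there.

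Now the Lebesgue property of $B^{b}_{n}(X,Y)$ forces $T_{\alpha}\to 0$ in its topology of uniform convergence on some zero neighborhood; that is, there is a zero neighborhood $U\subseteq X$ such that for every zero neighborhood $V\subseteq Y$ one has $T_{\alpha}(U)\subseteq V$ eventually. The final step converts this back to convergence of $(y_{\alpha})$: since $U$ absorbs $x_0$, pick $\lambda>0$ with $\lambda x_0\in U$. Given an arbitrary zero neighborhood $W\subseteq Y$, apply the above with $V=\lambda W$ to obtain $\lambda y_{\alpha}=T_{\alpha}(\lambda x_0)\in\lambda W$ eventually, whence $y_{\alpha}\in W$ eventually. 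Therefore $y_{\alpha}\xrightarrow{\tau}0$, and $Y$ has the Lebesgue property.

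I expect the main obstacle to be the verification that $\inf_{\alpha}T_{\alpha}=0$ in the operator lattice --- this is where the order completeness and Fatou property of $Y$, the ideal structure from Lemma \ref{20}, and the positivity of $f$ all come together --- together with the careful bookkeeping in the final scaling argument, where one must exploit the arbitrariness of the target neighborhood $W$ to absorb the fixed factor $\lambda$ arising from the single neighborhood $U$ on which operator convergence is controlled.
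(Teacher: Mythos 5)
Your proposal is correct and follows essentially the same route as the paper's own proof: the rank-one embedding $T_{\alpha}=f(\cdot)y_{\alpha}$ via Hahn--Banach, the verification that $T_{\alpha}\downarrow 0$ in $B^{b}_{n}(X,Y)$ using the ideal structure from Lemma \ref{20}, and then evaluation at $x_0$ after invoking the Lebesgue property. The only cosmetic differences are that you verify $\inf_{\alpha}T_{\alpha}=0$ by a direct pointwise argument where the paper cites \cite[Theorems 1.18 and 1.35]{AB}, and you spell out the absorption/scaling step that the paper leaves implicit; both are sound.
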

\begin{proof}
Suppose $(y_{\alpha})$ is a net in $Y$ such that $y_{\alpha}\downarrow 0$. Pick any positive $x_0\in X$. By the Hahn-Banach theorem, there exists $f\in X_{+}^{*}$ such that $f(x_0)=1$. Define $T_{\alpha}:X\to Y$ with $T_{\alpha}(x)=f(x)y_{\alpha}$. It is easy to see that each $T_{\alpha}$ is $nb$-bounded as well as order bounded.
Note that by Lemma \ref{20}, $B^{b}_{n}(X,Y)$ is an ideal in $B^{b}(X,Y)$. Thus by considering \cite[Theorem 1.35]{AB} and \cite[Theorem 1.18]{AB}, we conclude that $T_{\alpha}\downarrow 0$ in $B^{b}_{n}(X,Y)$.
So, by the assumption, $T_{\alpha}\rightarrow 0$ uniformly on some zero neighborhood $U\subseteq X$. Therefore, $T_{\alpha}(x_0)\rightarrow 0$ in $Y$; this means $(y_{\alpha})$ is a null net in $Y$, as we wanted.

\end{proof}

\begin{prop}
Suppose $X$ and $Y$ are locally solid vector lattices such that $X$ is locally convex and $Y$ possesses the Fatou property and is order complete. If $B^{b}_{b}(X,Y)$ has the Lebesgue property, then so is $Y$.
\end{prop}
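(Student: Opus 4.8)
The plan is to imitate the preceding proposition almost verbatim, replacing the class $B^{b}_{n}(X,Y)$ by $B^{b}_{b}(X,Y)$ together with its defining topology. The only structural difference is that $B^{b}_{b}(X,Y)$ carries the topology of uniform convergence on bounded sets rather than on a single zero neighborhood; this is precisely the feature that will be exploited at the final step, where a singleton plays the role of the bounded set.

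First I would take a net $(y_{\alpha})$ in $Y$ with $y_{\alpha}\downarrow 0$ and fix some $0\neq x_0\in X_{+}$. Since $X$ is locally convex, the Hahn--Banach theorem supplies a positive continuous functional $f\in X_{+}^{*}$ with $f(x_0)=1$. I then set $T_{\alpha}(x)=f(x)y_{\alpha}$. Each $T_{\alpha}$ is order bounded, and it is $bb$-bounded because $f$, being continuous, is itself $bb$-bounded (recall $B_c\subseteq B_b$), so for a bounded $B\subseteq X$ the scalar set $f(B)$ is bounded and hence $T_{\alpha}(B)$ lies in a bounded multiple of the order interval determined by $y_{\alpha}$.

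Next I would verify that $T_{\alpha}\downarrow 0$ in the operator lattice. Because $Y$ is order complete and Fatou, $B^{b}(X,Y)$ is an order complete vector lattice, and by the Riesz--Kantorovich description of the lattice operations (\cite[Theorem 1.18]{AB}, \cite[Theorem 1.35]{AB}) the infimum of the decreasing net is computed pointwise on $X_{+}$: for $x\geq 0$ one has $\inf_{\alpha}T_{\alpha}(x)=f(x)\inf_{\alpha}y_{\alpha}=0$, whence $\inf_{\alpha}T_{\alpha}=0$ in $B^{b}(X,Y)$. By Lemma \ref{20}(ii), $B^{b}_{b}(X,Y)$ is an ideal of $B^{b}(X,Y)$ containing every $T_{\alpha}$ as well as the infimum $0$; therefore the relation $T_{\alpha}\downarrow 0$ already holds inside $B^{b}_{b}(X,Y)$. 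Finally, the Lebesgue property of $B^{b}_{b}(X,Y)$ forces $T_{\alpha}\to 0$ in its topology, i.e.\ uniformly on bounded subsets of $X$; since the singleton $\{x_0\}$ is bounded, this yields $T_{\alpha}(x_0)\to 0$ in $Y$, and as $T_{\alpha}(x_0)=f(x_0)y_{\alpha}=y_{\alpha}$ we conclude $y_{\alpha}\xrightarrow{\tau}0$, which is the Lebesgue property for $Y$.

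The only delicate point, and the one I would treat most carefully, is the transfer of $T_{\alpha}\downarrow 0$ from $B^{b}(X,Y)$ down to $B^{b}_{b}(X,Y)$: this rests on the ideal property from Lemma \ref{20}(ii) together with the elementary fact that an infimum computed in the larger lattice which happens to lie in the ideal is also the infimum within the ideal. Everything else is a routine repetition of the $B^{b}_{n}$ argument, with ``bounded set'' in place of ``zero neighborhood.''
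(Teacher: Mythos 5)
Your proposal is correct and follows essentially the same route as the paper's own proof: the same rank-one operators $T_{\alpha}(x)=f(x)y_{\alpha}$ built from a Hahn--Banach functional, the same appeal to Lemma \ref{20} together with \cite[Theorem 1.35]{AB} and \cite[Theorem 1.18]{AB} to get $T_{\alpha}\downarrow 0$ in $B^{b}_{b}(X,Y)$, and the same final step of evaluating the uniform-on-bounded-sets convergence at the singleton $\{x_0\}$. Your treatment is in fact slightly more explicit than the paper's (the pointwise infimum computation and the transfer of the infimum into the ideal), but the argument is identical in substance.
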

\begin{proof}
Suppose $(y_{\alpha})$ is a net in $Y$ such that $y_{\alpha}\downarrow 0$. Pick any positive $x_0\in X$. By the Hahn-Banach theorem, there exists $f\in X_{+}^{*}$ such that $f(x_0)=1$. Define $T_{\alpha}:X\to Y$ via $T_{\alpha}(x)=f(x)y_{\alpha}$. It is easy to see that each $T_{\alpha}$ is $bb$-bounded and order bounded.
Note that by Lemma \ref{20}, $B^{b}_{b}(X,Y)$ is an ideal in $B^{b}(X,Y)$. Therefore, \cite[Theorem 1.35]{AB} and \cite[Theorem 1.18]{AB} yield that $T_{\alpha}\downarrow 0$ in $B^{b}_{b}(X,Y)$.
So, by the assumption, $T_{\alpha}\rightarrow 0$ uniformly on bounded sets. Therefore, $T_{\alpha}(x_0)\rightarrow 0$ in $Y$; this means $(y_{\alpha})$ is a null net in $Y$, as claimed.

\end{proof}
For the converse, we have the following.
\begin{thm}\label{1}
Suppose $X$ and $Y$ are locally solid vector lattices such that $X$ possesses $AM$ and Levi properties and $Y$ is order complete. If $Y$ has the Lebesgue property, then so is $B^{b}(X,Y)$.
\end{thm}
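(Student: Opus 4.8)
The plan is first to pin down exactly which topological vector lattice is in play. Because $X$ carries the $AM$ and Levi properties, Proposition \ref{14} shows every order bounded operator $X\to Y$ is automatically $bb$-bounded, so $B^{b}(X,Y)=B^{b}_{b}(X,Y)$ as sets, and I would equip it with the topology of uniform convergence on bounded subsets of $X$. Since $Y$ is order complete, \cite[Theorem 1.18]{AB} makes $B^{b}(X,Y)$ a Dedekind complete vector lattice in which infima of downward directed nets are computed pointwise on $X_{+}$. The Lebesgue property of $Y$ in fact forces the Fatou property (a $\tau$-closed solid set is order closed once order convergence implies $\tau$-convergence, which is the Lebesgue property, so the $\tau$-closures of a solid base give an order-closed solid base), and then \cite[Lemma 2.2]{EGZ} guarantees that this topology is locally solid, so that speaking of its Lebesgue property is meaningful.

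To verify the Lebesgue property I would take a net $(T_{\alpha})$ in $B^{b}(X,Y)$ with $T_{\alpha}\downarrow 0$; in particular each $T_{\alpha}\geq 0$. By the pointwise computation of infima recalled above, $T_{\alpha}(x)\downarrow 0$ in $Y$ for every $x\in X_{+}$. The task is then to upgrade this pointwise order-null behaviour to convergence in the operator topology, i.e. to produce, for each bounded $B\subseteq X$ and each solid zero neighbourhood $V\subseteq Y$, an index $\alpha_{0}$ with $T_{\alpha}(B)\subseteq V$ for all $\alpha\geq\alpha_{0}$.

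This is where the hypotheses on $X$ are used decisively. Fixing a bounded $B\subseteq X$ and passing to its solid hull I may assume $B$ solid; then, exactly as in the relevant direction of the proof of Theorem \ref{12} (which uses only the $AM$ and Levi properties, not order completeness of $X$), the set $(B_{+})^{\vee}$ is bounded by the $AM$ property and upward directed, hence has a supremum $u\in X_{+}$ by the Levi property, giving $B\subseteq[-u,u]$. For $x\in B$ and any $\alpha$, positivity of $T_{\alpha}$ yields $|T_{\alpha}(x)|\leq T_{\alpha}(|x|)\leq T_{\alpha}(u)$. Since $u\in X_{+}$ we have $T_{\alpha}(u)\downarrow 0$ in $Y$, so the Lebesgue property of $Y$ gives $T_{\alpha}(u)\xrightarrow{\tau}0$; choosing $\alpha_{0}$ with $T_{\alpha}(u)\in V$ for $\alpha\geq\alpha_{0}$ and using solidity of $V$, the domination $|T_{\alpha}(x)|\leq T_{\alpha}(u)$ puts $T_{\alpha}(x)\in V$ simultaneously for all $x\in B$. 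Thus $T_{\alpha}(B)\subseteq V$, and as $B,V$ were arbitrary, $T_{\alpha}\to 0$ in $B^{b}(X,Y)$.

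The only genuine obstacle is the passage from pointwise convergence to uniform convergence on bounded sets: a downward directed net of operators tending to $0$ at each point need not do so uniformly in general. The whole mechanism is that $AM$ plus Levi on $X$ collapses every topologically bounded set into a single order interval $[-u,u]$, after which the one dominating net $T_{\alpha}(u)$ controls $T_{\alpha}$ uniformly over $B$ and the Lebesgue property of $Y$ closes the argument. The remaining points to check carefully are the identification $B^{b}(X,Y)=B^{b}_{b}(X,Y)$ with its topology, and the descent of $T_{\alpha}\downarrow 0$ in the operator lattice to $T_{\alpha}(x)\downarrow 0$ pointwise, both of which are supplied by the cited results.
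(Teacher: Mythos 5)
Your proof is correct, and at the decisive step it takes a cleaner route than the paper. Both arguments share the same skeleton: identify $B^{b}(X,Y)$ with $B^{b}_{b}(X,Y)$ via Proposition \ref{14}, use the $AM$ and Levi properties (the relevant direction of Theorem \ref{12}, which, as you rightly note, does not need order completeness of $X$ --- the paper itself glosses over this) to trap a solid bounded set $B$ inside an order interval $[-u,u]$, and use pointwise computation of infima to get $T_{\alpha}(x)\downarrow 0$ for $x\in X_{+}$. Where you diverge is in upgrading pointwise to uniform convergence on $B$: the paper builds an auxiliary doubly-indexed net $A=\{T_{\alpha}(x):\alpha\in I,\ x\in[0,u]\}$ over the lexicographically ordered set $I\times[0,u]$, massages it into a decreasing net via $A^{\wedge}$, shows its infimum is zero, and applies the Lebesgue property of $Y$ to that whole net. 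You instead observe that positivity gives the single domination $|T_{\alpha}(x)|\leq T_{\alpha}(|x|)\leq T_{\alpha}(u)$ for all $x\in B$ simultaneously, apply the Lebesgue property only to the one net $T_{\alpha}(u)\downarrow 0$, and finish by solidity of the neighborhood $V$. This is both shorter and more robust: it avoids the paper's most delicate construction (the lexicographic net is not literally decreasing as written and needs the $A^{\wedge}$ patch), and it makes transparent that uniformity over $B$ costs nothing once $B$ sits under a single element $u$. Your preliminary remarks --- that the Lebesgue property of $Y$ already yields the Fatou property, so that the operator topology is locally solid and the Lebesgue property of $B^{b}(X,Y)$ is even meaningful --- address a point the paper silently skips, and they are correct as well.
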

\begin{proof}
First, observe that by Proposition \ref{14}, $B^{b}(X,Y)=B^{b}_{b}(X,Y)$. Suppose $(T_{\alpha})_{\alpha\in I}$ is a net in $B^{b}_{b}(X,Y)$ such that $T_{\alpha}\downarrow 0$.
Choose a bounded set $B\subseteq X$; W.L.O.G, we may assume that $B$ is solid, otherwise, consider the solid hull of $B$ which is certainly bounded. By Theorem \ref{12}, $B$ is order bounded. Put $A=\{T_{\alpha}(x), \alpha\in I, x\in B_{+}\}$. Again, W.L.O.G, assume that $B_{+}=[0,u]$, in which $u\in X_{+}$. Define $\Lambda=I\times [0,u]$. Certainly, $\Lambda$ is a directed set while we consider it with the lexicographic order, namely, $(\alpha,x)\leq (\beta,y)$ if $\alpha<\beta$ or $\alpha=\beta$ and $x\leq y$. In notation, $A=(y_{\lambda})_{\lambda\in \Lambda}\geq {\sf 0}$. So, by considering $A^{\wedge}$, one can assume $A$ as a decreasing  net in $Y_{+}$. Therefore, it has an infimum. We claim that $A\downarrow 0$; otherwise, there is a $0\neq y\in Y_{+}$ such that $y_{\lambda}\geq y$ for each $\lambda\in \Lambda$. Therefore, for each $\alpha$ and each $x\in B_{+}$, $T_{\alpha}(x)\geq y$ which is in contradiction with $T_{\alpha}\downarrow 0$. By the assumption, $y_{\lambda}\rightarrow 0$ in $Y$. Therefore, for an arbitrary zero neighborhood $V\subseteq Y$, there exists a $\lambda_0=(\alpha_0,x_0)$ such that $y_{\lambda}\in V$ for each $\lambda\geq\lambda_0$. Suppose $\lambda=(\alpha,x)$. So, for each $\alpha>\alpha_0$ and for each $x\in B_{+}$,  $T_{\alpha}(x)\in V$. Since $B\subseteq B_{+}-B_{+}$, we conclude that  $T_{\alpha}\rightarrow 0$ in $B^{b}_{b}(X,Y)$.
\end{proof}
\begin{rem}
Observe that hypotheses in Theorem \ref{1} are essential and can not be removed. Consider $X=c_0$ with norm topology; it possesses the $AM$-property and its topology is Lebesgue but it fails to have the Levi property. Suppose $(P_n)$ is the sequence of basis projections on $X$. Each $P_n$ is $bb$-bounded and $P_n\uparrow I$, where $I$ is the identity operator on $X$. But $P_n\nrightarrow I$ uniformly on the unit ball of $X$.
Moreover, consider $Y=\ell_1$ with norm topology; it has the Lebesgue and the Levi properties but it fails to have the $AM$-property. Again, if $(P_n)$ is the sequence of basis projections on $Y$, $P_n\uparrow I$ but certainly not in the topology  of uniform convergence on bounded sets.
\end{rem}
\begin{prop}
Suppose $X$ and $Y$ are locally solid vector lattices such that $X$ is locally convex and  $Y$ has the Fatou property and is order complete. If $B^{b}_{c}(X,Y)$ has the Lebesgue property, then so is $Y$.
\end{prop}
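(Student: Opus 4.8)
The plan is to run the same argument as in the two preceding propositions (the $B^{b}_{n}$ and $B^{b}_{b}$ cases), changing only the final passage from convergence in the operator topology to pointwise convergence, since that is where the topology of equicontinuous convergence genuinely enters. First I would take a net $(y_{\alpha})$ in $Y$ with $y_{\alpha}\downarrow 0$ and fix a nonzero $x_0\in X_{+}$. Using local convexity of $X$ together with the Hahn-Banach theorem, I would produce a positive continuous functional $f\in X_{+}^{*}$ with $f(x_0)=1$, and define the rank-one operators $T_{\alpha}:X\to Y$ by $T_{\alpha}(x)=f(x)y_{\alpha}$. Since $f$ is continuous and each $y_{\alpha}$ is a fixed vector, each $T_{\alpha}$ is continuous; being a positive functional times a positive vector on the cone, each is also order bounded, so $T_{\alpha}\in B^{b}_{c}(X,Y)$.

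Next I would verify that $T_{\alpha}\downarrow 0$ in $B^{b}_{c}(X,Y)$. Because $f\geq 0$ and $y_{\alpha}\downarrow 0$, the net $(T_{\alpha})$ is decreasing and its pointwise infimum on $X_{+}$ is $0$; invoking \cite[Theorem 1.18]{AB} and \cite[Theorem 1.35]{AB} then gives $T_{\alpha}\downarrow 0$ in $B^{b}(X,Y)$. Since $B^{b}_{c}(X,Y)$ is an ideal of $B^{b}(X,Y)$ by Lemma~\ref{20} and contains $0$ together with all the $T_{\alpha}$, any lower bound of $\{T_{\alpha}\}$ inside $B^{b}_{c}(X,Y)$ is also a lower bound in $B^{b}(X,Y)$, so the infimum is unchanged and $T_{\alpha}\downarrow 0$ holds inside $B^{b}_{c}(X,Y)$ as well. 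The Lebesgue hypothesis on $B^{b}_{c}(X,Y)$ then yields $T_{\alpha}\to 0$ in the topology of equicontinuous convergence.

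The decisive step, and the one that distinguishes this proof from the $B^{b}_{n}$ and $B^{b}_{b}$ versions, is to extract $T_{\alpha}(x_0)\to 0$ from equicontinuous convergence. Given a zero neighborhood $V\subseteq Y$, the definition of equicontinuous convergence supplies a zero neighborhood $U\subseteq X$ such that for every $\varepsilon>0$ there is an $\alpha_0$ with $T_{\alpha}(U)\subseteq \varepsilon V$ for all $\alpha\geq\alpha_0$. Since $U$ is absorbing, there is $\mu>0$ with $x_0\in\mu U$, say $x_0=\mu u$ with $u\in U$; choosing $\varepsilon=1/\mu$ and using linearity gives $T_{\alpha}(x_0)=\mu T_{\alpha}(u)\in\mu(\varepsilon V)=V$ for all $\alpha\geq\alpha_0$. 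Hence $T_{\alpha}(x_0)\to 0$ in $Y$, and as $f(x_0)=1$ this says exactly $y_{\alpha}\to 0$. Since the net with $y_{\alpha}\downarrow 0$ was arbitrary, $Y$ has the Lebesgue property.

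I expect the main obstacle to be precisely this last extraction: unlike uniform convergence on a fixed neighborhood (the $nb$ case) or on bounded sets (the $bb$ case), where $x_0$ can be absorbed directly, equicontinuous convergence controls $T_{\alpha}$ on $U$ only up to a scalable factor $\varepsilon$. One must therefore exploit the $\varepsilon$-flexibility built into its definition to absorb the fixed point $x_0$ into $U$ while still landing in $V$; once this bookkeeping is carried out, the rest of the argument is routine and parallels the earlier propositions.
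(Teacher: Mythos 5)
Your proof is correct and follows essentially the same route as the paper's: the same Hahn--Banach construction of the rank-one operators $T_{\alpha}(x)=f(x)y_{\alpha}$, the same appeal to Lemma \ref{20} and the cited results from \cite{AB} to get $T_{\alpha}\downarrow 0$ in $B^{b}_{c}(X,Y)$, and the same passage from equicontinuous convergence to $y_{\alpha}\to 0$. The only difference is that you explicitly carry out the absorption step ($x_{0}\in\mu U$, then take $\varepsilon=1/\mu$), which the paper leaves implicit when it asserts that equicontinuous convergence yields $T_{\alpha}(x)\to 0$ for each $x$.
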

\begin{proof}
Suppose $(y_{\alpha})$ is a net in $Y$ such that $y_{\alpha}\downarrow 0$. Pick any positive $x_0\in X$. By the Hahn-Banach theorem, there exists $f\in X_{+}^{*}$ such that $f(x_0)=1$. Define $T_{\alpha}:X\to Y$ with $T_{\alpha}(x)=f(x)y_{\alpha}$. It is easy to see that each $T_{\alpha}$ is continuous as well as order bounded.
Note that by Lemma \ref{20}, $B^{b}_{c}(X,Y)$ is an ideal in $B^{b}(X,Y)$. Therefore, \cite[Theorem 1.35]{AB} and \cite[Theorem 1.18]{AB} imply that $T_{\alpha}\downarrow 0$ in $B^{b}_{c}(X,Y)$.
So, by the assumption, $T_{\alpha}\rightarrow 0$ equicontinuously; for each zero neighborhood $V\subseteq Y$ there is a zero neighborhood $U\subseteq X$ such that for each $\varepsilon>0$, there exists an $\alpha_0$ with $T_{\alpha}(U)\subseteq \varepsilon V$ for each $\alpha\geq\alpha_0$.  This means that $T_{\alpha}(x)\rightarrow 0$ in $Y$ for each $x$. In particular, $T_{\alpha}(x_0)\rightarrow 0$ in $Y$; this results in $y_{\alpha}\rightarrow 0$ in $Y$.

\end{proof}

In general, we do not know if a version Theorem \ref{1} holds for continuous operators with respect to the equicontinuous convergence topology.
But we can have the following partial observation. Suppose $X$ is a Fr$\acute{e}$chet space which possesses $AM$ and Levi properties, then
by Proposition \ref{14} and \cite[Theorem 5.19]{AB1}, $B^{b}(X,Y)=B^{b}_{c}(X,Y)=B^{b}_{b}(X,Y)$ for any order complete locally solid vector lattice $Y$. So, by using Theorem \ref{1}, we have the following.
\begin{coro}
Suppose $X$ is a Fr$\acute{e}$chet space which possesses $AM$ and Levi properties and $Y$ is an order complete locally solid vector lattice. If $Y$ has the Lebesgue property, then so is $B^{b}_{c}(X,Y)$.
\end{coro}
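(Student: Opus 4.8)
The plan is to deduce the corollary from Theorem \ref{1} by identifying the space $B^{b}_{c}(X,Y)$ with $B^{b}(X,Y)$, the Fr\'echet hypothesis on $X$ being exactly what licenses this identification. First I would record the two set-theoretic equalities that make the reduction possible. Since $X$ possesses the $AM$ and Levi properties, Proposition \ref{14} shows that every order bounded operator $T\colon X\to Y$ is $bb$-bounded, so $B^{b}(X,Y)=B^{b}_{b}(X,Y)$. On the other hand, because $X$ is a Fr\'echet space, \cite[Theorem 5.19]{AB1} guarantees that every order bounded operator from $X$ into the order complete lattice $Y$ is continuous, whence $B^{b}(X,Y)\subseteq B_{c}(X,Y)$ and therefore $B^{b}(X,Y)=B^{b}_{c}(X,Y)$. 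Consequently the three classes $B^{b}(X,Y)$, $B^{b}_{b}(X,Y)$ and $B^{b}_{c}(X,Y)$ coincide as vector lattices; they carry the same order, and in particular the relation $T_{\alpha}\downarrow 0$ has the same meaning in each of them.

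Next I would invoke Theorem \ref{1}, whose hypotheses are met verbatim: $X$ has the $AM$ and Levi properties and $Y$ is order complete with the Lebesgue property. This yields that $B^{b}(X,Y)$ has the Lebesgue property, where, through the identification $B^{b}(X,Y)=B^{b}_{b}(X,Y)$, the ambient topology is that of uniform convergence on bounded sets. It then remains to transfer the property to $B^{b}_{c}(X,Y)$, whose topology is that of equicontinuous convergence. One inclusion is free: if a net converges to zero equicontinuously then, since every bounded set is absorbed by the neighbourhood furnished by equicontinuity, it converges to zero uniformly on bounded sets. For the reverse passage I would use that $X$ is metrizable. Fixing a countable decreasing base $(U_{n})$ of solid neighbourhoods of zero and noting, via Theorem \ref{12}, that the bounded subsets of $X$ coincide with the order bounded ones, a decreasing net $T_{\alpha}\downarrow 0$ in $B^{b}_{c}(X,Y)$ is order bounded and hence equicontinuous. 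Combining this equicontinuity with the countable base, one argues that uniform convergence on bounded sets already forces equicontinuous convergence, so the two topologies agree on the common underlying lattice and the Lebesgue property passes to $B^{b}_{c}(X,Y)$.

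I expect the main obstacle to be precisely this last topological matching, namely showing that uniform convergence on bounded sets of the decreasing net $(T_{\alpha})$ implies equicontinuous convergence. The set-level identifications and the easy inclusion are routine, but the reverse is where metrizability is genuinely needed: for a prescribed zero neighbourhood $V\subseteq Y$ one must produce a \emph{single} neighbourhood $U\subseteq X$ on which the $T_{\alpha}$ are eventually absorbed into every multiple $\varepsilon V$, and this requires leveraging the countable base $(U_{n})$ together with the equicontinuity of the order bounded net. Should the coincidence of topologies prove delicate in full generality, the fallback is to rerun the argument of Theorem \ref{1} directly inside $B^{b}_{c}(X,Y)$: from a solid bounded set $B$ with $B_{+}=[0,u]$ one forms the decreasing net $\{T_{\alpha}(x):x\in B_{+}\}$ in $Y_{+}$, applies the Lebesgue property of $Y$ to obtain pointwise smallness, and then converts that smallness into equicontinuous smallness by exploiting the countable neighbourhood base of $X$.
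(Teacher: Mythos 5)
Your first two reductions are exactly the paper's proof of this corollary: Proposition \ref{14} (using the $AM$ and Levi properties of $X$) gives $B^{b}(X,Y)=B^{b}_{b}(X,Y)$, and \cite[Theorem 5.19]{AB1} together with order completeness of $Y$ (so that order bounded operators are differences of positive ones) gives $B^{b}(X,Y)=B^{b}_{c}(X,Y)$; Theorem \ref{1} then supplies the Lebesgue property. Read against the sentence with which the paper introduces the corollary (``we do not know if a version of Theorem \ref{1} holds for continuous operators with respect to the equicontinuous convergence topology\dots we can have the following partial observation''), this is all the corollary claims: the set $B^{b}_{c}(X,Y)$, being identical with $B^{b}_{b}(X,Y)$, has the Lebesgue property in the topology Theorem \ref{1} works with, namely uniform convergence on bounded sets. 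Up to that point your argument is correct and coincides with the paper's, and you could simply stop there.

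Your third step --- upgrading the conclusion to the equicontinuous convergence topology --- is a genuine gap, and it is precisely the question the paper explicitly leaves open. The claim that for a decreasing net ``uniform convergence on bounded sets already forces equicontinuous convergence'' is never proved, and the sketched mechanisms do not produce it. Tail domination $0\leq T_{\alpha}\leq T_{\alpha_{0}}$ does give tail \emph{equicontinuity}: for each solid order closed $V$ one finds a single solid $U$ with $T_{\alpha}(U)\subseteq V$ for all $\alpha\geq\alpha_{0}$, by the Riesz--Kantorovich argument of Lemma \ref{20}. But equicontinuous convergence requires that the \emph{same} $U$ satisfy $T_{\alpha}(U)\subseteq\varepsilon V$ eventually for \emph{every} $\varepsilon>0$; the domination argument only yields $\varepsilon$-dependent neighborhoods $U_{\varepsilon}$ (indeed a fixed $U$ with $T_{\alpha_{0}}(U)\subseteq\varepsilon V$ for all $\varepsilon$ would force $T_{\alpha_{0}}(U)$ into $\bigcap_{\varepsilon>0}\varepsilon V$, i.e.\ essentially $T_{\alpha_{0}}(U)=\{0\}$). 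Nor does convergence uniformly on bounded sets help, since in a non-normable Fr\'echet space zero neighborhoods are unbounded, so that convergence says nothing about the sets $T_{\alpha}(U)$; and a countable base of neighborhoods does not repair either argument (metrizability buys the coincidence of the two operator topologies only in the normable case, where both reduce to the operator norm topology). So either delete this step --- the corollary, as the paper intends and proves it, is already established by your identifications plus Theorem \ref{1} --- or state it as an open problem rather than as part of the proof.
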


Moreover, since $B_{c}(X,Y)$ can be viewed as a subspace of $B_{b}(X,Y)$, we can consider the induced topology on it. So, we have the following.
\begin{prop}
Suppose $X$ is a locally solid vector lattice with the Heine-Borel property and $Y$ is a locally solid vector lattice which is order complete. If $Y$ has the Lebesgue property, then so is $B^{b}_{c}(X,Y)$; while it is equipped with the topology of uniform convergence on bounded sets.
\end{prop}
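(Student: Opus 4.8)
The plan is to mirror the strategy of Theorem \ref{1}, but to replace the $AM$ and Levi properties (which there forced bounded sets to be order bounded) by the Heine-Borel property, which instead forces bounded sets to be \emph{compact}; this compactness is then converted into uniform convergence by a Dini-type argument. Since the ambient topology is that of uniform convergence on bounded sets, the goal is: given a net $(T_{\alpha})$ in $B^{b}_{c}(X,Y)$ with $T_{\alpha}\downarrow 0$ and a bounded set $B\subseteq X$, produce for each solid zero neighborhood $V\subseteq Y$ an index $\alpha_0$ with $T_{\alpha}(B)\subseteq V$ for all $\alpha\geq\alpha_0$.

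First I would dispatch the structural preliminaries. Since $Y$ has the Lebesgue property it also has the Fatou property: every $\tau$-closed solid neighborhood is order closed, because if $x_{\lambda}\xrightarrow{o}x$ with the $x_{\lambda}$ in such a set, then $|x_{\lambda}-x|\leq z_{\beta}\downarrow 0$ and the Lebesgue and locally solid properties force $x_{\lambda}\xrightarrow{\tau}x$, so the limit stays in the set; and closed solid neighborhoods form a basis. Hence Lemma \ref{20} applies and $B^{b}_{c}(X,Y)$ is an ideal of $B^{b}(X,Y)$, which is order complete because $Y$ is. Therefore $T_{\alpha}\downarrow 0$ holds also in $B^{b}(X,Y)$, and by \cite[Theorem 1.18]{AB} the infimum is computed pointwise, i.e. $T_{\alpha}(x)\downarrow 0$ in $Y$ for every $x\in X_{+}$. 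The Lebesgue property of $Y$ then yields $T_{\alpha}(x)\to 0$ for every $x\in X_{+}$, and hence, writing $x=x^{+}-x^{-}$, for every $x\in X$.

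Next I would reduce to a compact set. Replacing $B$ by the closure of its solid hull, which is still bounded and is now closed and solid, the Heine-Borel property makes it compact; and since $y\mapsto|y|$ is continuous, $K=\{|y|:y\in B\}\subseteq X_{+}$ is a compact subset of the cone. As $B$ is solid and each $T_{\alpha}\geq 0$, for $y\in B$ we have $|T_{\alpha}(y)|\leq T_{\alpha}(|y|)$ with $|y|\in K$, so by solidity of $V$ it suffices to show $T_{\alpha}(K)\subseteq V$ eventually. This is the Dini step, which I expect to be the crux. Fix a solid zero neighborhood $W$ with $W+W\subseteq V$. For each $k\in K$ choose $\alpha_k$ with $T_{\alpha_k}(k)\in W$ (pointwise convergence), and by continuity of $T_{\alpha_k}$ pick a zero neighborhood $N_k$ with $T_{\alpha_k}(k+N_k)\subseteq T_{\alpha_k}(k)+W\subseteq V$. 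The sets $k+N_k$ cover $K$; extract a finite subcover indexed by $k_1,\dots,k_n$ and take $\alpha_0\geq\alpha_{k_1},\dots,\alpha_{k_n}$. For $\alpha\geq\alpha_0$ and $j\in K$, choosing $i$ with $j\in k_i+N_{k_i}$, the monotonicity of the net together with $j\geq 0$ gives $0\leq T_{\alpha}(j)\leq T_{\alpha_{k_i}}(j)\in V$, so $T_{\alpha}(j)\in V$ by solidity. Thus $T_{\alpha}(K)\subseteq V$, and the reduction above gives $T_{\alpha}(B)\subseteq V$.

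Since $B$ was an arbitrary bounded set, this shows $T_{\alpha}\to 0$ uniformly on bounded sets, so $B^{b}_{c}(X,Y)$ has the Lebesgue property. The main obstacle is the Dini step: pointwise order convergence alone gives no uniform control, and one must make three ingredients interlock, namely the \emph{compactness} furnished by Heine-Borel, the \emph{continuity} of each individual $T_{\alpha_k}$ (to pass from a single point to a neighborhood), and the \emph{monotonicity} of the net together with the \emph{solidity} of $V$ (to propagate the estimate from $T_{\alpha_{k_i}}$ to all later $T_{\alpha}$). The delicate point to verify carefully is precisely that the positive sandwich $0\leq T_{\alpha}(j)\leq T_{\alpha_{k_i}}(j)$ with $T_{\alpha_{k_i}}(j)\in V$ does place $T_{\alpha}(j)$ in $V$.
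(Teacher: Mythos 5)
Your proof is correct, and it reaches the conclusion by the same overall strategy as the paper (pointwise convergence from the Lebesgue property, compactness from Heine--Borel, then a Dini-type passage from pointwise to uniform convergence), but the crux is executed differently. The paper disposes of the Dini step by citing Corollary 15 of Timofte--Timofte, after noting that the order bounded net $(T_{\alpha})$ is uniformly bounded on the compact set $\overline{B}$; you instead prove the needed Dini theorem from scratch via the classical covering argument, using continuity of finitely many selected operators $T_{\alpha_{k_i}}$, monotonicity of the net, and solidity of the target neighborhood. Your version also supplies two details the paper leaves implicit: first, the justification that $T_{\alpha}\downarrow 0$ in $B^{b}_{c}(X,Y)$ yields $T_{\alpha}(x)\downarrow 0$ pointwise, which you obtain by deriving the Fatou property of $Y$ from the Lebesgue property (not among the stated hypotheses) so that Lemma \ref{20} makes $B^{b}_{c}(X,Y)$ an ideal of the order complete lattice $B^{b}(X,Y)$ and the infimum passes to $B^{b}(X,Y)$ and is computed pointwise; second, the reduction to the compact set $K=\{|y|:y\in B\}$ of \emph{positive} vectors, which matters because for non-positive $y$ the net $T_{\alpha}(y)$ need not be monotone, so a monotone Dini argument cannot be applied to $\overline{B}$ directly --- your use of $|T_{\alpha}(y)|\leq T_{\alpha}(|y|)$ together with solidity of $V$ handles exactly this point, which the paper's one-line appeal to the cited corollary glosses over. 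The trade-off is the usual one: the paper's proof is shorter but outsources the key analytic content and is slightly loose at the edges, while yours is self-contained, verifiable line by line, and makes visible precisely where each hypothesis (Heine--Borel, Lebesgue, order completeness, local solidity) enters.
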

\begin{proof}
Suppose $(T_{\alpha})$ is a net in $B^{b}_{c}(X,Y)$ such that $T_{\alpha}\downarrow 0$. Thus, for any $x\in X_{+}$, $T_{\alpha}(x)\downarrow 0$ so that $T_{\alpha}(x)\rightarrow  0$. Fix a bounded set $B\subseteq X$. Since the net $(T_{\alpha})$ is order bounded, we conclude that it is uniformly bounded on $\overline{B}$. But by the Heine-Borel property, $\overline{B}$ is compact. Now, \cite[Corollary 15]{T} implies that $T_{\alpha}(\overline{B})\rightarrow 0$ in $Y$. This means that $B^{b}_{c}(X,Y)$ possesses the Lebesgue property.
\end{proof}


\begin{thebibliography}{1}
\bibitem{AB1} C. D. Aliprantis and O. Burkinshaw, Locally solid Riesz spaces with applications to economics, Mathematical Surveys and Monographs, 105, American Mathematical Society, Providence, 2003.
\bibitem{AB} C. D. Aliprantis and O. Burkinshaw, Positive operators, 2nd edition,
Springer, 2006.
\bibitem{EGZ} N. Erkursun-Ozcan, N. Anıl Gezer, O. Zabeti, {\em Spaces of $u\tau$-Dunford-Pettis and $u\tau$-compact operators on locally solid vector lattices}, Matematički Vesnik, {\bf 71(4)} (2019), 351--358.
\bibitem {Ru} W. Rudin, Functional Analysis, Second edition. International Series in Pure and Applied Mathematics. McGraw-Hill, Inc., New York, 1991.
 \bibitem{T} Vlad Timofte and  Aida Timofte, {\em Generalized Dini theorems for nets of functions on arbitrary sets}, Positivity, {\bf 20(1)} (2016), 171--185.
\bibitem{Tr} V. G. Troitsky, {\em Spectral radii of bounded operators on topological vector spaces}, Panamer. Math. J., {\bf 11(3)} (2001), 1-–35.
\bibitem{Z} O. Zabeti, {\em A few remarks on boundedness in topological groups and topological modules,} Hacet. J. Math. Stat., {\bf 48(2)} (2019), 420--426.


\end{thebibliography}
\end{document}